\newcommand\nthalias[1]{\AddToHook{env/#1/begin}{\crefalias{lemma}{#1}}}
\crefname{section}{Section}{Sections}
\crefname{subsection}{\S}{\S\S}
\crefname{subsubsection}{\S}{\S\S}
\theoremstyle{plain}
\newtheorem{lemma}{Lemma}[section]
\newtheorem{proposition}[lemma]{Proposition}
\newtheorem{corollary}[lemma]{Corollary}
\newtheorem{theorem}[lemma]{Theorem}
\newtheorem{question}[lemma]{Question}
\theoremstyle{plain}
\newtheorem{theoremN}{Theorem}
\theoremstyle{plain}
\newtheorem{definition}[lemma]{Definition}
\newtheorem{example}[lemma]{Example}
\newtheorem{remark}[lemma]{Remark}
\newtheorem{remarks}[lemma]{Remarks}
\crefname{definition}{definition}{definitions}
\crefname{ex}{example}{examples}
\crefname{exs}{example}{examples}
\crefname{remark}{remark}{remarks}
\crefname{remarks}{remark}{remarks}
\crefname{convention}{convention}{conventions}
\crefname{notation}{notation}{notations}
\crefname{table}{table}{tables}
\crefname{lemma}{lemma}{lemmas}
\crefname{proposition}{proposition}{propositions}
\crefname{propositionN}{proposition}{propositions}
\crefname{corollary}{corollary}{corollaries}
\crefname{corollaryN}{corollary}{corollaries}
\crefname{theorem}{theorem}{theorems}
\crefname{theoremN}{theorem}{theorems}
\crefname{enumi}{}{}
\crefname{assumption}{assumption}{Assumptions}
\crefname{construction}{construction}{Constructions}
\crefname{question}{question}{Questions}
\crefname{equation}{}{}
\numberwithin{equation}{section}
\theoremstyle{nonumberplain}
\newtheorem{proof}{Proof}
\newcommand\pf[1]{\newtheorem{#1}{Proof of \Cref{#1}}}
\newcommand\bC{{\mathbb C}}
\newcommand\bG{{\mathbb G}}
\newcommand\bR{{\mathbb R}}
\newcommand\bZ{{\mathbb Z}}
\newcommand\cA{{\mathcal A}}
\newcommand\cB{{\mathcal B}}
\newcommand\cC{{\mathcal C}}
\newcommand\cE{{\mathcal E}}
\newcommand\cF{{\mathcal F}}
\newcommand\cK{{\mathcal K}}
\newcommand\cS{{\mathcal S}}
\DeclareMathOperator{\id}{id}
\DeclareMathOperator{\End}{\mathrm{End}}
\DeclareMathOperator{\Aut}{\mathrm{Aut}}
\newcommand{\cat}[1]{\textsc{#1}}
\newcommand{\qedhere}{\mbox{}\hfill\ensuremath{\blacksquare}}
\newcommand{\xrightarrowdbl}[2][]{%
  \xrightarrow[#1]{#2}\mathrel{\mkern-14mu}\rightarrow
}
\title{Non-commutative branched covers and bundle unitarizability}
\author{Alexandru Chirvasitu}
\begin{document}

\date{}

\newcommand{\Addresses}{{
  \bigskip
  \footnotesize

  \textsc{Department of Mathematics, University at Buffalo}
  \par\nopagebreak
  \textsc{Buffalo, NY 14260-2900, USA}  
  \par\nopagebreak
  \textit{E-mail address}: \texttt{achirvas@buffalo.edu}


}}

\maketitle

\begin{abstract}
  We prove that (a) the sections space of a continuous unital subhomogeneous $C^*$ bundle over compact metrizable $X$ admits a finite-index expectation onto $C(X)$, answering a question of Blanchard-Gogi\'{c} (in the metrizable case); (b) such expectations cannot, generally, have ``optimal index'', answering negatively a variant of the same question; and (c) a homogeneous continuous Banach bundle over a locally paracompact base space $X$ can be renormed into a Hilbert bundle in such a manner that the original space of bounded sections is $C_b(X)$-linearly Banach-Mazur-close to the resulting Hilbert module over the algebra $C_b(X)$ of continuous bounded functions on $X$. This last result resolves quantitatively another problem posed by Gogi\'{c}. 
\end{abstract}

\noindent {\em Key words:
  Banach bundle;
  Banach-Mazur distance;
  L\"owner-John ellipsoid;
  Vietoris topology;
  expectation;
  finite index;
  locally paracompact;
  locally trivial

}

\vspace{.5cm}

\noindent{MSC 2020: 46B03; 32K05; 52A21; 54D20; 46H25; 46B20; 46M20; 46E20}

\tableofcontents

\section*{Introduction}

We are concerned here with {\it Banach bundles} in the sense of \cite[Definition 1.1]{dg_ban-bdl} (conventions on the precise meaning of the phrase sometimes vary in the literature): 
\begin{itemize}
\item Continuous open maps
  \begin{equation*}
    \text{{\it total space} }
    \cE
    \xrightarrowdbl{\quad\pi\quad}
    X
    \text{ ({\it base space})};
  \end{equation*}

\item with Banach-space structures on the {\it fibers} $\cE_x:=\pi^{-1}(x)$, $x\in X$;

\item so that addition and scalar multiplication are continuous in the guessable sense, as maps
  \begin{equation*}
    \left(\text{scalar field }\bR\text{ or }\bC\right)\times \cE
    \xrightarrow{\quad}
    \cE
    \quad\text{and}\quad
    \left\{(p,q)\in \cE^2\ |\ \pi(p)=\pi(q)\right\}
    =:
    \cE\times_X\cE
    \xrightarrow{\quad}
    \cE;
  \end{equation*}

\item the norm $\cE\xrightarrow{\|\cdot\|}\bR_{\ge 0}$ is either
  \begin{itemize}
  \item continuous ({\it continuous} or {\it (F) bundles}, for {\it Fell} \cite[\S 1]{fell_ext}; overwhelmingly the focus of the paper);
  \item or {\it upper semicontinuous} \cite[Problem 7K]{wil_top} ({\it (H) Bundles}, for {\it Hofmann} \cite[Definitions 3.2 and 3.3]{zbMATH03539905});
  \end{itemize}
\item and such that
  \begin{equation*}
    \cE_{U,<\varepsilon}
    :=
    \left\{p\in \cE\ |\ \pi(p)\in U\quad\text{and}\quad \|p\|<\varepsilon\right\}
    ,\quad \text{nbhd }U\ni x\in X
    ,\quad \varepsilon>0
  \end{equation*}
  form a fundamental system of neighborhoods of the zero element $0_x\in \cE_x$. 
\end{itemize}
The terminology extends in the obvious fashion to  \cite[p.9]{dg_ban-bdl} to bundles of Hilbert spaces, Banach or $*$- or $C^*$-algebras, etc. 

The present paper was originally motivated by problems mentioned and in part addressed in \cite{MR3056657,bg_cx-exp}, revolving around the common theme of conveniently renorming continuous Banach (or $C^*$) bundles into Hilbert bundles (this is what the title's  unitarizability refers to).

There are several ways to make sense of this, with a common general flavor. The primary motivating interest in \cite{bg_cx-exp}, for instance, is the theory of non-commutative (or quantum) {\it branched covers}, initiated in \cite{pt_brnch} (whose introduction defines the phrase, classically, as a continuous, open, bounded-fiber-cardinality surjection between compact Hausdorff spaces with). That paper's main result \cite[Theorem 1.1]{pt_brnch} states that a surjection $Y\to X$ between compact Hausdorff spaces is a branched cover in this sense if and only if the corresponding embedding $C(X)\lhook\joinrel\to C(Y)$ admits a finite-index conditional expectation. A brief recollection:
\begin{itemize}[wide]
\item A {\it conditional expectation} \cite[Definition II.6.10.1 and Theorem II.6.10.2]{blk} $A\xrightarrow{E}B$ of a $C^*$-algebra onto a $C^*$-subalgebra is an idempotent map of norm 1.

  \cite[Theorem II.6.10.2]{blk} ensures the equivalence between that characterization and the seemingly stronger \cite[Definition II.6.10.1]{blk}: norm-1 idempotent maps are automatically {\it completely positive} \cite[Definition II.6.9.1]{blk} and $B$-bimodule maps. 
 
\item A conditional expectation $A\xrightarrow{E}B$ has {\it finite index} \cite[Definition 2]{fk_fin-ind} if
  \begin{equation*}
    K(E):=\inf\{K\ge 1\ |\ K\cdot E-\id \ge 0\text{ (i.e. the map is positive)}\}<\infty.
  \end{equation*}
  We will refer to $K(E)$ as the {\it $K$-constant of $E$}.
\end{itemize}
The aforementioned classical result then motivates \cite[Definition 1.2]{pt_brnch}, whereby non-commutative branched covers are unital $C^*$ embeddings admitting finite-index expectations. With this in place, \cite[Problem 3.11]{bg_cx-exp} asks:

\begin{question}\label{qu:bg_pb3.11}
  Suppose $\cA\xrightarrowdbl{}X$ is a {\it subhomogeneous} (i.e. \cite[Definition IV.1.4.1]{blk} $\sup_x \dim\cA_x<\infty$) unital (F) $C^*$-algebra bundle over compact Hausdorff $X$.
  \begin{enumerate}[(1),wide]
  \item\label{item:qu:bg_pb3.11:fin.ind} Does the embedding
    \begin{equation*}
      C(X)
      \lhook\joinrel\xrightarrow{\quad}
      \Gamma(\cA)
      :=
      \left\{
        \text{(continuous) {\it sections} \cite[p.9]{dg_ban-bdl} of $\cA$}
      \right\}
    \end{equation*}
    admit a finite-index expectation?
    
  \item\label{item:qu:bg_pb3.11:opt.ind} Having defined the {\it rank} \cite[Definition 2.4]{bg_cx-exp} $r(\cA)$ as
    \begin{equation*}
      r(\cA):=\sup_{x\in X}\left\{\sum\left(\text{dimensions of irreducible $\cA_x$-representations}\right)\right\},
    \end{equation*}  
    does the embedding $C(X)\lhook\joinrel\to A:=\Gamma(\cA)$ admit a conditional expectation $A\xrightarrow{E}C(X)$ with (the optimal value \cite[Theorem 1.4]{bg_cx-exp}) $K(E)=r(\cA)$?  
  \end{enumerate}
\end{question}

One of the goals of the paper is to address the first part of \Cref{qu:bg_pb3.11} affirmatively over metrizable $X$ (\Cref{th:nc.brnch.cov.cpctmetr}), and the second part negatively in \Cref{ex:bg_pb3.11_no} as well as \Cref{le:plbk.cones} and \Cref{pr:same.brat}. The bound $r(\cA)\ge 3$ is the best one can expect in the second part of the following statement, by \cite[Proposition 3.7]{bg_cx-exp}.

\begin{theoremN}\label{thn:nc.brnch.cov}
  Let $\cA\xrightarrowdbl{}X$ be a unital subhomogeneous (F) $C^*$ bundle over a compact metrizable space.

  \begin{enumerate}[(1),wide]
  \item The embedding
    \begin{equation*}
      \left\{
        \text{bounded continuous functions on $X$}
      \right\}
      =:
      C_b(X)
      \lhook\joinrel\xrightarrow{\quad}
      \Gamma_b(\cA)
      :=
      \left\{
        \text{bounded sections of $\cA$}
      \right\}
    \end{equation*}
    has a finite-index expectation.

  \item That expectation cannot always be chosen to have $K$-constant $r(\cA)$: counterexamples exist as soon as $r(\cA)\ge 3$.  \qedhere
  \end{enumerate}  
\end{theoremN}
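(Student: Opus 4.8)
The plan is to handle the two parts by entirely different mechanisms: part (1) reduces to a gluing problem for fibrewise states, while part (2) combines a rigidity statement with an explicit discontinuity.

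\textbf{Part (1).} Since $X$ is compact, $C_b(X)=C(X)$ and $\Gamma_b(\cA)=\Gamma(\cA)$, so I am back in the setting of \Cref{qu:bg_pb3.11}\ref{item:qu:bg_pb3.11:fin.ind}. First I would record the dictionary: a unital expectation $E\colon\Gamma(\cA)\to C(X)$ is automatically a $C(X)$-bimodule map, hence is given fibrewise by a field of states $\omega_x$ on $\cA_x$ via $E(s)(x)=\omega_x(s(x))$, and $E$ lands in $C(X)$ exactly when $x\mapsto\omega_x(s(x))$ is continuous for every $s\in\Gamma(\cA)$ (a \emph{continuous field of states}). A short computation gives $K(E_x)=\big(\min\{\omega_x(p)\mid p\in\cA_x\ \text{a minimal projection}\}\big)^{-1}$ and $K(E)=\sup_x K(E_x)$, so part (1) reduces to producing a continuous field of states with a \emph{uniform} lower bound $\inf_x\min_p\omega_x(p)=:\delta>0$, whence $K(E)\le\delta^{-1}<\infty$. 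To build one I would work locally and patch: over a small neighbourhood $U$ of any point I would produce a continuous field of faithful unital $*$-representations $\pi\colon\cA|_U\to M_N(C(U))$ into a \emph{trivial} matrix bundle ($N\le r(\cA)$, finite by subhomogeneity) and pull back the normalised trace, $\omega^U_x:=\tfrac1N\operatorname{tr}_{M_N}\circ\pi_x$; this has $\omega^U_x(p)=\tfrac1N\operatorname{rank}\pi_x(p)\ge\tfrac1N$ for every nonzero projection. Compactness yields a finite subcover $U_1,\dots,U_m$ and a subordinate partition of unity $\{f_j\}$, and the convex combination $\omega_x:=\sum_j f_j(x)\,\omega^{U_j}_x$ is a continuous field of states with $\min_p\omega_x(p)\ge\min_j N_j^{-1}$ uniformly. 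The main obstacle is the local step, i.e.\ constructing continuous fields of faithful representations into trivial matrix bundles; this is where metrizability genuinely enters (paracompactness and enough continuous sections) and where one must control the jumps in fibrewise block structure allowed by mere continuity. I expect this to rest on the structure theory of continuous fields of finite-dimensional $C^*$-algebras and to be the technical heart. Note one cannot simply take the normalised canonical trace $\tau_x=\operatorname{Tr}_x/r(\cA_x)$, as that field need not be continuous — precisely the phenomenon that obstructs optimality in part (2).

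\textbf{Part (2).} I would first prove a rigidity statement: if $E$ attains $K(E)=r(\cA)$, then at every point $x$ of the maximal-rank locus $\{x\mid r(\cA_x)=r(\cA)\}$ — open, by lower semicontinuity of the rank — the fibre state $E_x$ is forced to equal the canonical normalised trace $\tau_x$. Indeed, for a maximal orthogonal family of minimal projections $p_1,\dots,p_{r(\cA)}$ with $\sum_j p_j=1_{\cA_x}$, the bounds $E_x(p_j)\ge 1/K(E_x)\ge 1/r(\cA)$ and $\sum_j E_x(p_j)=E_x(1)=1$ force $E_x(p_j)=1/r(\cA)$ for all $j$; since this holds for every such family and minimal projections within a block are unitarily conjugate, $E_x=\tau_x$. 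It then remains to exhibit a bundle with $r(\cA)=3$ for which $x\mapsto\tau_x$ does \emph{not} extend to a continuous field of states: any continuous $E$ would have to agree with $\tau_\bullet$ on the dense maximal-rank locus, contradicting rigidity. I would arrange this by gluing two degenerations of the fibres toward a common lower-rank fibre at a point $x_0$ so that the canonical trace acquires \emph{different one-sided limits} there (equivalently, the limiting weight on some minimal projection drops below $1/r(\cA)$, forcing a limit functional of index exceeding $r(\cA)$); this is the content I would package through the pullback/cone mechanism of \Cref{le:plbk.cones} and \Cref{pr:same.brat}, yielding the explicit counterexample of \Cref{ex:bg_pb3.11_no}. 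The main obstacle is designing the degeneration and verifying genuine discontinuity, together with checking that $r(\cA)\ge 3$ is necessary — the phenomenon cannot occur when $r(\cA)\le 2$, consistent with the sharpness afforded by \cite[Proposition 3.7]{bg_cx-exp}.
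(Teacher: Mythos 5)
Your part (2) is sound and is essentially the paper's own route: your rigidity step (positivity of $r\cdot E-\id$ together with $\sum_j E_x(p_j)=1$ forcing $E_x(p)=1/r$ on every minimal projection, hence $E_x=$ the optimal state on the dense maximal-rank locus) is exactly the paper's appeal to the uniqueness of the optimal state \cite[Lemma 3.2(ii)]{bg_cx-exp}, and the ``two degenerations toward a common lower-rank fibre'' you describe is precisely \Cref{ex:bg_pb3.11_no}, where the germs $\bC^2\ni(a,b)\mapsto\mathrm{diag}(a,a,b)$ and $(a,b)\mapsto\mathrm{diag}(a,b,b)$ into $M_3$ give incompatible one-sided limits of the normalized trace; \Cref{pr:same.brat} is the general criterion you anticipate.

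Part (1), however, has a fatal gap at exactly the step you flag as the technical heart: the local fields of faithful representations into trivial matrix bundles need not exist, and the paper's own \Cref{ex:bg_pb3.11_no} --- a bundle squarely within the theorem's hypotheses --- refutes them. Suppose $V\ni 0$ is an open interval and $\Pi\colon\Gamma(\cA|_V)\to M_N(C(V))$ is $C(V)$-linear, $*$-preserving and fibrewise faithful, inducing representations $\pi_y\colon\cA_y\to M_N$ with $\Pi(s)(y)=\pi_y(s(y))$. For $y\ne 0$ each $\pi_y$ is a representation of $M_3$, hence unitarily equivalent to $\id^{\oplus k(y)}\oplus 0$, and $y\mapsto\mathrm{Tr}\,\Pi(\mathbf 1)(y)=3k(y)$ is continuous, so $k\equiv k_-$ on $V\cap[-1,0)$ and $k\equiv k_+$ on $V\cap(0,1]$. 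Now take the constant sections $s=\left(\mathrm{diag}(1,1,0),\ \mathrm{diag}(1,0,0)\right)$ and $s'=\left(\mathrm{diag}(0,0,1),\ \mathrm{diag}(0,1,1)\right)$, passing through the minimal projections $(1,0)$ and $(0,1)$ of $\cA_0\cong\bC^2$. Continuity of $\mathrm{Tr}\,\Pi(s)$ at $0$ gives $2k_-=\mathrm{Tr}\,\pi_0(1,0)=k_+$, and of $\mathrm{Tr}\,\Pi(s')$ gives $k_-=\mathrm{Tr}\,\pi_0(0,1)=2k_+$; hence $k_-=k_+=0$ and $\pi_0=0$. So \emph{any} such field is identically zero at $0$ --- in particular never faithful there. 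The computation uses only unitary invariance of the trace, so passing to locally trivial targets changes nothing, and no partition-of-unity variant can route around it: every trace-pullback field active near the exceptional point degenerates at it (even after normalizing by $\mathrm{Tr}\,\pi_y(\mathbf 1)$, the one-sided limits are the two distinct germ-restrictions of the trace, so continuity still fails).

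The paper's actual proof of part (1) (\Cref{th:nc.brnch.cov.cpctmetr}) replaces your local-existence-plus-patching mechanism wholesale: it constructs a weak$^*$ lower semicontinuous set-valued map $x\mapsto\cK_x$ of compact convex sets of \emph{faithful} states with uniformly bounded $K$-constant, built by recursion along the stratification of fibre types by embedding germs (\Cref{def:brat.germ}) --- singleton optimal states on minimal strata, then extensions across the finitely many germs followed by $\Aut(\cA_x)$-invariant convex hulls --- and then applies a Michael-type selection theorem of Horvath inside the weak$^*$-compact unit ball of $\Gamma(\cA)^*$. This is where compact metrizability genuinely enters (separability of $\Gamma(\cA)$ makes that ball metrizable), rather than through paracompactness and partitions of unity as in your sketch. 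Your fibrewise dictionary ($E\leftrightarrow$ continuous field of states, $K(E)=\sup_x K(E_x)$, $K(E_x)^{-1}=\min$ of $E_x$ over minimal projections) is correct and consistent with the paper; what must be replaced is your production mechanism for the continuous field of uniformly faithful states.
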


The expectations of \Cref{thn:nc.brnch.cov} will equip $\Gamma(\cA)$ with a {\it Hilbert $C(X)$-module} \cite[Definition 15.1.5]{wo} structure
\begin{equation*}
  \Gamma(\cA)^2\ni
  (s,t)
  \xmapsto{\quad\braket{-\mid -}\quad}
  E(s^*t)
  \in
  C(X)
  :=\text{bounded continuous functions on $X$}
\end{equation*}
and will correspondingly make the fibers $\cA_x$ into Hilbert spaces, in such a fashion that the {\it Banach-Mazur distance} (recalled in \Cref{eq:bm.orig}) between the induced Hilbert-space norm $\|\cdot\|_{2,x}$ at $x$ and the original Banach norm $\|\cdot\|_x$ is bounded in $x\in X$. This justifies bringing up (well-behaved) unitarizability. 

The focus is somewhat different in \cite{MR3056657}, but some of the same questions present themselves. \cite[Problem 4.9]{MR3056657} is more or less \Cref{qu:bg_pb3.11}\Cref{item:qu:bg_pb3.11:fin.ind}, with the inessential caveat of working over {\it locally} compact base spaces and spaces $\Gamma_0(\cA)$ of sections vanishing at infinity (the usual {\it unitization} \cite[\S II.1.2]{blk} gadgetry easily bridges the gap). \cite[Problem 4.7]{MR3056657} transports the unitarizability issue to Banach (rather than $C^*$-algebra) bundles, the connection being that outlined in the preceding paragraph:

\begin{question}\label{qu:gog.prb.4.7}
  Let $\cE\xrightarrowdbl{} X$ be an {\it $n$-homogeneous} (F) Banach bundle over a locally compact $T_2$ space, i.e. \cite[Definition 2.2]{dupre_hilbund-1} one whose fibers are all $n$-dimensional for some $n\in \bZ_{\ge 0}$.

  Does the space $\Gamma_0(\cE)$ of sections vanishing at $\infty$ admit a {\it $C_0(X)$-valued inner product} \cite[Definition 15.1.1]{wo} 
  \begin{equation*}
    \Gamma_0(E)^2
    \xrightarrow[\quad]{\quad\braket{-\mid-}\quad}
    C_0(X)
  \end{equation*}
  making it into a {\it Hilbert $C_0(X)$-module} \cite[Definition 15.1.5]{wo}, and such that the corresponding norm
  \begin{equation*}
    \Gamma_0(\cE)
    \ni
    s
    \xmapsto{\quad}
    \|\braket{s\mid s}\|^{1/2}_{C_0(X)}
    \in \bR_{\ge 0}
  \end{equation*}
  is equivalent with the original supremum norm on $\Gamma_0(\cE)$? 
\end{question}

\Cref{th:isbddunit} below answers in the affirmative, with a couple of amplifications:
\begin{itemize}[wide]
\item the space can be locally {\it para}compact (Hausdorff), in the sense \cite[Definition 3.1]{gierz_bdls} that every point has a closed {\it paracompact} \cite[Definition 20.6]{wil_top} neighborhood;

\item and there is a quantitative aspect to the problem, in giving the expected upper bound on (a Banach $C_b(X)$-module version: \Cref{def:bm.dist.mod}) of the already-mentioned Banach-Mazur distance between the original $\Gamma_b$ and the analogous space for the Hilbert renorming. 
\end{itemize}

\begin{theoremN}
  An $n$-homogeneous Banach bundle $\cE\xrightarrowdbl{}X$ be over a locally paracompact $T_2$ base space can be equipped with a Hilbert-bundle renorming in such a fashion that
  \begin{equation*}
    \|T\|\cdot \|T^{-1}\|\le \sqrt n
  \end{equation*}
  for
  \begin{equation*}
    \begin{tikzpicture}[>=stealth,auto,baseline=(current  bounding  box.center)]
      \path[anchor=base] 
      (0,0) node (l) {$(\Gamma_b(\cE),\ \text{original norm})$}
      +(8,0) node (r) {$(\Gamma_b(\cE),\ \text{new norm})$}
      ;
      \draw[->] (l) to[bend left=6] node[pos=.5,auto] {$\scriptstyle T:=\id$} (r);
      \draw[->] (r) to[bend left=6] node[pos=.5,auto] {$\scriptstyle T^{-1}$} (l);
    \end{tikzpicture}
  \end{equation*}
  \qedhere
\end{theoremN}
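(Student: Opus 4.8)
The plan is to renorm every fibre by its maximal-volume inscribed (L\"owner--John) ellipsoid and then to show that this pointwise-canonical choice varies continuously, so that it assembles into a genuine continuous Hilbert-bundle structure. Recall that for an $n$-dimensional normed space with unit ball $B$ the maximal-volume inscribed ellipsoid $\cJ(B)$ is unique and centrally symmetric, and that John's theorem gives $\cJ(B)\subseteq B\subseteq\sqrt n\,\cJ(B)$. Writing $\|\cdot\|_{2,x}$ for the Hilbert norm on $\cE_x$ whose unit ball is $\cJ(B_x)$, with $B_x:=\{v\in\cE_x\ |\ \|v\|_x\le 1\}$, these inclusions read $\tfrac1{\sqrt n}\|v\|_{2,x}\le\|v\|_x\le\|v\|_{2,x}$ fibrewise. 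Once the $\|\cdot\|_{2,x}$ are known to constitute a continuous Hilbert bundle, the associated inner product $\braket{s\mid t}(x):=\braket{s(x)\mid t(x)}_{2,x}$ makes $\Gamma_b(\cE)$ a Hilbert $C_b(X)$-module with norm $\|s\|_{\mathrm{new}}=\sup_x\|s(x)\|_{2,x}$, and taking suprema over $x$ in the fibrewise estimate yields $\|T\|\le\sqrt n$ and $\|T^{-1}\|\le 1$ for $T=\id$, hence the claimed $\|T\|\cdot\|T^{-1}\|\le\sqrt n$.

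Everything therefore reduces to the continuity of $x\mapsto\|\cdot\|_{2,x}$. Because $\cJ$ is natural under linear isomorphisms — a linear $L$ sends inscribed ellipsoids to inscribed ellipsoids and multiplies all volumes by $|\det L|$, hence sends the maximal one to the maximal one — the construction is independent of any local trivialization, so it is enough to verify continuity locally. Here I would use local paracompactness: it guarantees that the (F) bundle has enough continuous sections, and selecting $n$ of them that are linearly independent at a base point, hence on a neighbourhood, produces a local frame and an identification $\cE|_U\cong U\times\bF^n$ under which the fibre norms become a jointly continuous family $(x,v)\mapsto\|v\|_x$ of norms on $\bF^n$.

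The crux, and the step I expect to cost the most work, is to show that $x\mapsto\cJ(B_x)$ is continuous for such a family. I would argue this from the uniqueness of the John ellipsoid together with compactness: over compact parameter neighbourhoods the bodies $B_x$ vary continuously in the Hausdorff metric, the inscribed ellipsoids form a compact family, and any Hausdorff limit of maximal-volume inscribed ellipsoids is again a maximal-volume inscribed ellipsoid of the limit body, so uniqueness forces convergence to $\cJ(B_{x_0})$; continuity of the associated positive-definite Gram matrices, and hence of $\|\cdot\|_{2,x}$, follows. A little care is then needed to promote continuity of the fibre norms to the full Banach-bundle axioms for the renormed $\cE$ — in particular that the sets $\cE_{U,<\varepsilon}$ remain a neighbourhood base of the zero section — but this is routine given the two-sided comparison with the original norm.

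Assembling the pieces, the renormed bundle is a continuous Hilbert bundle and $\Gamma_b(\cE)$ carries the Hilbert $C_b(X)$-module structure above; the fibrewise Banach--Mazur estimate then globalizes to the asserted bound in the sense of the module Banach--Mazur distance of \Cref{def:bm.dist.mod}.
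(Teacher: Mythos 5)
Your proposal is correct and takes essentially the same route as the paper: the paper also renorms each fibre by its canonical extremal ellipsoid (it happens to use the circumscribed L\"owner ellipsoid $K^L(\cE_{x,1})$ rather than your inscribed John ellipsoid --- a dual and interchangeable choice, and its continuity lemma, \Cref{pr:contellips}, treats both), localizes via frames of sections exactly as you do, and extracts the same bound $\|T\|\cdot\|T^{-1}\|\le\sqrt n$ from \cite[Theorem 15.5, equation (15.9)]{tj_bm}. The only divergence is inside the continuity lemma, where the paper passes to convergent subnets and uses the contact-point characterization $\sum_i c_{i}v_{i}\otimes v_{i}^{*}=\id$ of the extremal ellipsoid, while you use Hausdorff-metric volume continuity plus uniqueness --- both standard and valid, though note that over $\bC$ the volume argument needs the extremal ellipsoid taken Hermitian (circled), a case the paper's citation of \cite{tj_bm} covers explicitly.
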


\subsection*{Acknowledgements}

I am grateful for very fruitful exchanges with I. Gogi{\'c}, on the contents of \cite{MR3056657,bg_cx-exp} and much more.  


\section{Equivalent Hilbert-bundle structures}\label{se:ban2hilb}

For the most part, the choice of base field ($\bR$ or $\bC$) will make no difference; the discussion will thus be mostly field-independent, with appropriate caveats mentioned occasionally along the way. Meanings of symbols might change appropriately: $C_b(X)$ ($C_0(X)$ for locally compact $X$), for instance, denote algebras of bounded (respectively infinitely-vanishing) continuous functions on a space $X$ valued in the ground field: real or complex, as the case may be. We occasionally drop those subscripts (`0' or `b') when the space is compact Hausdorff. 

One goal, in the current section, is to eventually give an affirmative answer to \cite[Problem 4.7]{MR3056657}, recalled as \Cref{qu:gog.prb.4.7} above. We first recast the problem in terms less specific to LCH spaces: $\Gamma_0$ (sections vanishing at infinity) is only appropriate for work with locally compact spaces, whereas local {\it para}compactness is often sufficient. Additionally, streamlining the terminology will also be convenient.

\begin{definition}\label{def:untrzbl.types}
  Let $\cE\xrightarrowdbl{} X$ be a Banach bundle over a space $X$.
  \begin{enumerate}[(1),wide]
  \item\label{item:def:untrzbl.types:plain} The bundle is {\it unitarizable}
    if it is {\it equivalent} to a Hilbert bundle $\cE'\xrightarrowdbl{} X$, i.e. there is a horizontal homeomorphism, linear along each fiber, making
    \begin{equation}\label{eq:renorm}
      \begin{tikzpicture}[>=stealth,auto,baseline=(current  bounding  box.center)]
        \path[anchor=base] 
        (0,0) node (l) {$\cE$}
        +(2,-.5) node (d) {$X$}
        +(4,0) node (r) {$\cE'$}
        ;
        \draw[->] (l) to[bend left=6] node[pos=.5,auto] {$\scriptstyle \cong$} (r);
        \draw[->>] (r) to[bend left=6] node[pos=.5,auto] {$\scriptstyle $} (d);
        \draw[->>] (l) to[bend right=6] node[pos=.5,auto,swap] {$\scriptstyle $} (d);
      \end{tikzpicture}
    \end{equation}
    commute. We also refer to an equivalence in this sense as a {\it renorming}, so that unitarizability might be phrased as the existence of a Hilbert-bundle renorming. 
    
  \item\label{item:def:untrzbl.types:bdd} It is {\it boundedly (or $\Gamma_b$-)unitarizable} if it admits a Hilbert-bundle renorming so that the identity identifies the respective spaces $\Gamma_b$ of bounded sections.    

    Or: denoting by $\Gamma_{b,\|\cdot\|}$ and $\Gamma_{b,\vvvert\cdot\vvvert}$ the spaces of bounded sections for the two supremum norms $\|\cdot\|$ (original Banach bundle) and $\vvvert\cdot\vvvert$ (renormed Hilbert bundle),
    \begin{equation}\label{eq:id.norm2norm}
      \Gamma_{b,\|\cdot\|}
      \xrightarrow{\quad\id\quad}
      \Gamma_{b,\vvvert\cdot\vvvert}
    \end{equation}
    is a topological isomorphism. 
    
  \item\label{item:def:untrzbl.types:0bdd} Similarly, for locally compact $X$ the bundle is {\it 0-boundedly (or $\Gamma_0$-)unitarizable} if the analogous conclusion holds with spaces $\Gamma_0$ of sections vanishing at infinity in place of $\Gamma_b$.

    In other words, the 0-boundedly unitarizable bundles are exactly those for which \Cref{qu:gog.prb.4.7} has an affirmative answer. 
  \end{enumerate}  
  In all cases, we make the convention that if the original bundle was (F) (which is overwhelmingly the case throughout the present paper), then the equivalent Hilbert-bundle structure is also required to be (F). In other words, the renorming is required to preserve continuity, if present. 
\end{definition}

As far as {\it affirmative} answers to \Cref{qu:gog.prb.4.7} go, `boundedly' is better than `0-boundedly' (which is just as well: the former applies to wider classes of spaces, which was the motivation behind the present aside to begin with).

\begin{lemma}\label{le:bdd>0bdd}
  For an LCH space $X$, boundedly unitarizable (F) Banach bundles are also 0-boundedly so. 
\end{lemma}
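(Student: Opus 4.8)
The plan is to show that the \emph{same} Hilbert-bundle renorming $\vvvert\cdot\vvvert$ witnessing bounded unitarizability (\Cref{def:untrzbl.types}\Cref{item:def:untrzbl.types:bdd}) also witnesses $0$-bounded unitarizability (\Cref{def:untrzbl.types}\Cref{item:def:untrzbl.types:0bdd}). Everything reduces to upgrading the hypothesis to a \emph{uniform fiberwise} comparison of the two norms: there are constants $0<c\le C$ with
\begin{equation*}
  c\,\|v\|_x\ \le\ \vvvert v\vvvert_x\ \le\ C\,\|v\|_x,\qquad x\in X,\ v\in\cE_x.
\end{equation*}
Granting this, the two supremum norms satisfy $c\|s\|_\infty\le\vvvert s\vvvert_\infty\le C\|s\|_\infty$ on \emph{every} section, and—since the estimate is pointwise—a section vanishes at infinity for $\|\cdot\|$ iff it does for $\vvvert\cdot\vvvert$: the set $\{\vvvert s\vvvert\ge\delta\}$ is closed (both norms being continuous) and sits inside the compact set $\{\|s\|\ge\delta/C\}$, hence compact, and symmetrically. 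Thus $\Gamma_{0,\|\cdot\|}$ and $\Gamma_{0,\vvvert\cdot\vvvert}$ coincide as sets and carry equivalent norms, i.e.\ the analogue of \Cref{eq:id.norm2norm} with $\Gamma_0$ in place of $\Gamma_b$ is a topological isomorphism.

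The heart of the matter is therefore passing from the \emph{global} norm equivalence on $\Gamma_b$ to the \emph{pointwise} one above; this is the step I expect to be the main obstacle, and it is exactly where the standing convention of \Cref{def:untrzbl.types} (that an (F) renorming stays (F)) earns its keep. Bounded unitarizability means the identity \Cref{eq:id.norm2norm} and its inverse are bounded, giving global constants $c,C$ with $c\|u\|_\infty\le\vvvert u\vvvert_\infty\le C\|u\|_\infty$ for all $u\in\Gamma_b$. Fix $x\in X$ and $v\in\cE_x$. Using that an (F) bundle is \emph{full}—every fibre element extends to a continuous section, so there is a continuous $s$ with $s(x)=v$ (cf.\ \cite{dg_ban-bdl})—together with the continuity of \emph{both} norms ($\|\cdot\|$ because the bundle is (F), $\vvvert\cdot\vvvert$ because the renorming is (F) by convention), I can choose for any $\varepsilon>0$ a neighbourhood $U\ni x$ on which simultaneously $\|s(y)\|<\|v\|_x+\varepsilon$ and $\vvvert s(y)\vvvert<\vvvert v\vvvert_x+\varepsilon$. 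Urysohn's lemma on the LCH space $X$ furnishes a compactly supported $\varphi\colon X\to[0,1]$ with $\varphi(x)=1$ and $\operatorname{supp}\varphi\subseteq U$; then $t:=\varphi s$ is a (compactly supported, hence $\Gamma_b$) section with $t(x)=v$, $\|t\|_\infty\le\|v\|_x+\varepsilon$ and $\vvvert t\vvvert_\infty\le\vvvert v\vvvert_x+\varepsilon$.

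Feeding $t$ into the two global inequalities and using $\|t(x)\|_x\le\|t\|_\infty$ and $\vvvert t(x)\vvvert_x\le\vvvert t\vvvert_\infty$ gives
\begin{equation*}
  c\,\|v\|_x \le c\|t\|_\infty \le \vvvert t\vvvert_\infty \le \vvvert v\vvvert_x+\varepsilon,
  \qquad
  \vvvert v\vvvert_x \le \vvvert t\vvvert_\infty \le C\|t\|_\infty \le C\bigl(\|v\|_x+\varepsilon\bigr);
\end{equation*}
letting $\varepsilon\downarrow 0$ yields the uniform fiberwise comparison with the same constants $c,C$, and combined with the first paragraph this proves the lemma. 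I expect the only genuinely delicate points to be the appeal to fullness of (F) bundles (to have enough sections probing each fibre) and the insistence that the renorming preserve continuity, without which $\vvvert\cdot\vvvert$ need not be continuous and the localizing neighbourhood $U$ for the new norm could fail to exist.
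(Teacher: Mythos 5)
Your argument is correct, but it takes a genuinely different route from the paper's. The paper's proof is soft and module-theoretic: since \Cref{eq:id.norm2norm} is a $C_b(X)$- (hence $C_0(X)$-) linear topological isomorphism, it carries the compactly supported sections $\Gamma_{00,\|\cdot\|}=C_{00}(X)\cdot\Gamma_{0,\|\cdot\|}$ into their analogue $\Gamma_{00,\vvvert\cdot\vvvert}$, and one then simply takes closures ($\Gamma_0$ being the norm-closure of $\Gamma_{00}$ inside $\Gamma_b$) and repeats the argument with the inverse map; no pointwise estimates are needed, and in particular no appeal to the bundle having any sections through prescribed fibre elements. You instead upgrade the global equivalence on $\Gamma_b$ to a \emph{uniform fiberwise} one, $c\,\|v\|_x\le\vvvert v\vvvert_x\le C\,\|v\|_x$, by probing each fibre with a section through $v$, localizing via the continuity of both norms (upper semicontinuity would in fact suffice for the one-sided bounds you use), and cutting off with an Urysohn bump; vanishing at infinity then transfers through your superlevel-set comparison. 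This buys strictly more than the lemma asserts: it identifies $\Gamma_{0,\|\cdot\|}$ and $\Gamma_{0,\vvvert\cdot\vvvert}$ as sets with \emph{pointwise}-equivalent norms, with the same constants $c,C$ as the global equivalence, which makes the paper's subsequent remark that $d_{b\cat{BM}}\ge d_{0\cat{BM}}$ completely transparent. The price is one extra hypothesis you should state more carefully: your parenthetical claim that (F) bundles are automatically full is false over general base spaces, but harmless here, since over LCH (more generally, locally paracompact Hausdorff) $X$ fullness is the Douady--Dal Soglio-H\'erault theorem (see \cite[Appendix C]{fd_bdl-1}, or \cite[Proposition 3.4]{zbMATH03539905}, which the paper itself invokes elsewhere in exactly this role). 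Both proofs are valid; the paper's is shorter and needs nothing beyond the definitions and density of compactly supported sections, while yours yields a quantitative fibrewise statement that the soft argument does not immediately provide.
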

\begin{proof}
  Simply observe that when the base space is locally compact, the identity \Cref{eq:id.norm2norm}, assumed a topological isomorphism, maps the subspace
  \begin{equation*}
    \Gamma_{0,\|\cdot\|}
    \le
    \Gamma_{b,\|\cdot\|}
  \end{equation*}
  of sections vanishing at infinity onto its analogue $\Gamma_{0,\vvvert\cdot\vvvert}$. Indeed, $\Gamma_{0,\|\cdot\|}$ is the norm-closure of its dense subspace
  \begin{equation*}
    \begin{aligned}
      \Gamma_{00,\|\cdot\|}
      &:=
        \left\{\text{compactly-supported sections}\right\}\\
      &=
        C_{00}(X)\cdot \Gamma_{0,\|\cdot\|}
        \le
        \Gamma_{0,\|\cdot\|},\\
      C_{00}(X)
      &:=
        \left\{\text{compactly-supported functions}\right\},\\
    \end{aligned}    
  \end{equation*}
  that space is mapped by the $C_0(X)$-morphism \Cref{eq:id.norm2norm} into its analogue $\Gamma_{00,\vvvert\cdot\vvvert}$, and all of this applies to the inverse of \Cref{eq:id.norm2norm}. 
\end{proof}

The following preliminary remark will not, strictly speaking, be needed in order to address \Cref{qu:gog.prb.4.7}, but should be regarded as a cognate of sorts (cf. \Cref{cor:isunitrzbl}). {\it Banach manifolds} are as in \cite[\S 3]{upm_ban}, \cite[\S 5]{bourb_vars_1-7}, etc.

\begin{proposition}\label{pr:locpc.base.plbk}
  Homogeneous (F) Banach bundles over locally paracompact $T_2$ base spaces are pullbacks of locally trivial Banach bundles on completely metrizable analytic Banach manifolds. 
\end{proposition}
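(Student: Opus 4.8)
The plan is to realise $\cE$ as a pullback of a tautological $n$-plane bundle over an infinite Grassmannian, exploiting that (in the sense of \Cref{def:untrzbl.types}\Cref{item:def:untrzbl.types:plain}) an equivalence need only be a fibrewise-linear homeomorphism. Fix a separable Hilbert space $H$ (say $\ell^2$ over the ground field) and let $\mathrm{Gr}_n(H)$ be its Grassmannian of $n$-dimensional subspaces, with tautological bundle $\gamma_n\to\mathrm{Gr}_n(H)$ whose fibre over $W$ is $W$ with the inherited Hilbert norm. Writing nearby subspaces as graphs $W_0\to C_0$ over a fixed topological complement $H=W_0\oplus C_0$ gives charts modelled on the Banach spaces $\mathcal L(W_0,C_0)$, with transition maps the usual operator-analytic graph-reparametrisation formulas; thus $\mathrm{Gr}_n(H)$ is an analytic Banach manifold, completely metrizable in the gap metric, and $\gamma_n$ is a locally trivial (F) Hilbert (hence Banach) bundle over it. It therefore suffices to produce a continuous classifying map $f\colon X\to\mathrm{Gr}_n(H)$ together with a fibrewise-linear homeomorphism $\cE\cong f^{*}\gamma_n$.

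Two ingredients feed into $f$. First, local triviality: because the bundle is (F), homogeneous, and the base is locally paracompact, the Douady--dal Soglio-H\'erault fullness theorem supplies enough continuous sections, and finite-dimensionality promotes any fibre basis realised by sections to a local frame (linear independence being open and the norm continuous), so $\cE$ is locally trivial as a vector, a fortiori Banach, bundle. Second, a global fibrewise-injective bundle map $J\colon\cE\to X\times H$: from a trivialising cover $\{U_i\}$, local frames, and a locally finite subordinate partition of unity $\{\rho_i\}$, one sets $J=(\rho_i\cdot\mathrm{triv}_i)_i$ into $X\times\bigl(\bigoplus_i\bK^n\bigr)\hookrightarrow X\times H$. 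Then $x\mapsto W_x:=J_x(\cE_x)$ is the desired $f$, and $J$ is a fibrewise-linear homeomorphism onto $f^{*}\gamma_n$, exhibiting $\cE$ as the asserted pullback. Since $\gamma_n$ is Hilbert, this simultaneously renorms $\cE$ to a Hilbert bundle, which is presumably the content of \Cref{cor:isunitrzbl}.

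The main obstacle is the globalisation step over a base that is only locally, not globally, paracompact: the construction of $J$ wants a globally locally-finite subordinate family, which paracompactness---not its local version---supplies. I would address this by extracting from fullness a single countable family of global sections whose values span every fibre (using that the rank is the fixed finite number $n$, so countably many sections chosen along a locally paracompact exhaustion already suffice), thereby building $J$ without a global partition of unity; failing that, one reduces to the paracompact case and assembles the manifold over the pieces. This is where I expect essentially all of the work to sit.

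A secondary, optional strengthening is worth flagging. If one insists on an \emph{isometric} pullback---a genuinely Banach, not merely Hilbert, universal bundle for which $f^{*}\mathcal V$ reproduces the original norm on the nose---then the averaging above breaks, since isometric embeddings are not closed under convex combination. The remedy is to replace $H$ by $Z:=\bigl(\bigoplus_i C(S)\bigr)_{\infty}$, with $S$ a fixed Euclidean unit sphere in $(\bK^n)^{*}$, to embed each trivialised fibre $(\bK^n,\nu)$ isometrically by $v\mapsto[\phi\mapsto\phi(v)/\nu^{*}(\phi)]$---continuous in $\nu$ because $\nu\mapsto\nu^{*}$ is and $\nu^{*}$ is bounded below on the compact $S$---and to patch with a subordinate family normalised so that $\max_i\rho_i\equiv1$ rather than $\sum_i\rho_i\equiv1$; the supremum norm on $Z$ then returns exactly $\|v\|$. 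One pulls back the tautological bundle of $\mathrm{Gr}_n(Z)$, an analytic Banach manifold by the same chart description.
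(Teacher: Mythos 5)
There is a genuine gap, and it sits exactly where you predicted: the globalisation of $J$. Your construction of a fibrewise-injective map $\cE\to X\times H$ needs a locally finite partition of unity subordinate to a trivialising cover of \emph{all} of $X$, and local paracompactness does not supply one (the long line is locally compact, hence locally paracompact, but not paracompact; the paper's \Cref{res:badmflds} is precisely about this regime). Neither of your proposed repairs closes the hole. A countable family of global sections spanning every fibre need not exist: there is no ``locally paracompact exhaustion'' in general (such spaces need not be Lindel\"of or even normal), $\Gamma_b(\cE)$ need not be separable, and fullness gives no cardinality control on how many sections are needed. The fallback of ``reducing to the paracompact case and assembling over the pieces'' is circular: gluing classifying maps defined on closed paracompact neighborhoods is exactly the kind of patching a global partition of unity would perform. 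There is also a smaller slip independent of this: even over a paracompact base the trivialising cover may be uncountable, so your intermediate target $\bigoplus_i\bK^n$ is nonseparable and does not embed in $\ell^2$; one must take $H=\ell^2(\kappa)$ for $\kappa$ depending on the cover. That is fixable, but it signals that a \emph{fixed} universal target is the wrong starting point here.

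The paper's proof dodges the obstacle by dualising your picture: instead of embedding the fibres as subspaces of an ambient trivial bundle (which requires choices --- frames, partitions of unity), it presents each fibre as a \emph{quotient} of the canonical ambient object the bundle already carries, namely its own section space. By \cite[Theorem 3.2]{gierz_bdls}, (F) bundles over locally paracompact $T_2$ bases are full, and rescaling makes the relevant sections bounded; evaluation then gives $\Gamma/\Gamma_{x\to 0}\cong\cE_x$ with $\Gamma:=\Gamma_b(\cE)$, i.e.\ a continuous map $X\to\bG(\Gamma\xrightarrowdbl{}n)$ into the Grassmannian of $n$-dimensional quotients of $\Gamma$, along which $\cE$ is the pullback of the canonical quotient bundle --- locally trivial over that analytic Banach manifold. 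Complete metrizability comes from identifying $\bG(\Gamma\xrightarrowdbl{}n)$ with the Grassmannian $\bG(n\lhook\joinrel\to\Gamma^*)$ of $n$-dimensional subspaces of the dual, topologised by the (complete) Hausdorff metric between unit balls. This tautological classifying map requires no partition of unity, no countability, and no prior local triviality (the paper derives local triviality as a by-product, \Cref{re:loctriv.over.locparaco}, whereas you use it as input). Your chart description of the Grassmannian and the first two ingredients of your argument are fine, and your isometric strengthening via $C(S)$-valued embeddings is a nice aside, but as it stands the proof works only over paracompact bases; to cover the stated generality you would need the quotient-space (or some comparably choice-free) mechanism.
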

\begin{proof}
  Consider a Banach bundle $\cE\xrightarrowdbl{}X$ as in the statement. It is, by \cite[Theorem 3.2]{gierz_bdls}, {\it full} in the sense \cite[Definition 2.1]{gierz_bdls} that arbitrary elements of the total space $\cE$ lie on global sections. Such sections can moreover be chosen bounded, as one can easily correct for unboundedness by rescaling (much as in the proof of \cite[Proposition 2.2]{gierz_bdls}, say): given a section $s$ with $s(x_0)=p\in \cE$, consider a continuous function $X\xrightarrow{\varphi}[0,1]$ with $\varphi(x_0)=1$, and substitute
  \begin{equation*}
    s\rightsquigarrow \left(x\xmapsto{\quad}s(x)\cdot\min(1,\varphi(x))\right).
  \end{equation*}
  We thus have an identification
  \begin{equation}\label{eq:fib.is.quot}
    \Gamma_b(X)/\Gamma_{b\mid x\to 0}(X)\cong \cE_x,
  \end{equation}
  with
  \begin{equation*}
    \Gamma_b:=\text{bounded sections}
    \quad\text{and}\quad
    \Gamma_{b\mid x\to 0}
    :=
    \left\{s\in \Gamma_b\ |\ s(x)=0\right\}.
  \end{equation*}
  The left-hand side of \Cref{eq:fib.is.quot} is a dimension-$n$ quotient space of the Banach space $\Gamma:=\Gamma_b(\cE)$ for $n:=\dim\cE_x$, $x\in X$, and we thus obtain a continuous map
  \begin{equation*}
    X
    \ni x
    \xmapsto{\quad\psi\quad}
    \cE_x
    \cong
    \Gamma/\Gamma_{x\to 0}
    \in
    \bG(\Gamma\xrightarrowdbl{}n)
  \end{equation*}
  with the latter symbol denoting the {\it Grassmannian} (analytic Banach) manifold of $n$-dimensional quotients of $\Gamma$ (see e.g. \cite[Example 3.11]{upm_ban} or \cite[\S 5.2.6]{bourb_vars_1-7}). 

  $\cE$ can then be recovered as the pullback along $\psi$ of the {\it canonical quotient bundle} \cite[\S 3.2.3]{3264} on $\bG(\Gamma,n)$ whose fiber over an $n$-dimensional quotient $\Gamma\xrightarrowdbl{}\Gamma'$ is the selfsame $\Gamma'$. That bundle is locally trivial, as easily seen by trivializing it over an open cover for the Grassmannian realizing the latter as an analytic manifold.

  As for complete metrizability, simply observe that duality identifies the Grassmannian $\bG(\Gamma\xrightarrowdbl{}n)$ with the analogous Grassmannian $\bG(n\lhook\joinrel\to\Gamma^*)$ of $n$-dimensional {\it subspaces} of the dual Banach space $\Gamma^*$, and the topology on $\bG(n\lhook\joinrel\to\Gamma^*)$ is induced by the (complete \cite[Proposition 7.3.7]{bbi}) {\it Hausdorff metric} \cite[Definition 7.3.1]{bbi}
  \begin{equation*}
    d_H(\bullet,\bullet')
    :=
    \max\left(
      \sup_{x\in \bullet}\inf_{x'\in\bullet'}d(x,x')
      ,\quad
      \sup_{x'\in \bullet'}\inf_{x\in\bullet}d(x,x')
    \right)
    ,\quad
    \bullet,\bullet'\subseteq \text{ambient metric space}
  \end{equation*}
  between the unit balls of the $n$-dimensional subspaces of $\Gamma^*$. 
\end{proof}

\begin{remark}\label{re:loctriv.over.locparaco}
  \Cref{pr:locpc.base.plbk} in particular recovers by somewhat unusual means the well-known local triviality of homogeneous (F) bundles over locally {\it para}compact base spaces; the result is often stated assuming local {\it compactness} instead (e.g. \cite[Theorem 18.5]{gierz_bdls}, \cite[\S II.17, Exercise 37]{fd_bdl-1}). \cite[Proposition 2.3]{dupre_hilbund-1} does work with paracompact base spaces, but refers to Hilbert bundles instead; the local triviality argument does, however, easily adapt to Banach bundles.
\end{remark}

\Cref{pr:locpc.base.plbk} will suffice for the following weaker analogue of the sought-after affirmative answer to \Cref{qu:gog.prb.4.7}:

\begin{corollary}\label{cor:isunitrzbl}
  Homogeneous (F) Banach bundles over locally paracompact $T_2$ base spaces are unitarizable. 
\end{corollary}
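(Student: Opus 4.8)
The plan is to use \Cref{pr:locpc.base.plbk} in order to replace the possibly-non-paracompact base $X$ by one over which partitions of unity are available. The soft but crucial observation is that \emph{unitarizability passes to pullbacks}: if $\cF\xrightarrowdbl{}Y$ is a unitarizable (F) Banach bundle, with renorming (fiberwise-linear horizontal homeomorphism) $\cF\xrightarrow{\cong}\cF'$ onto a Hilbert bundle, and $X\xrightarrow{f}Y$ is continuous, then the pullback $f^*\cF\xrightarrowdbl{}X$ is again unitarizable. Indeed, the inner products on the fibers $\cF'_{f(x)}$ pull back to inner products on $(f^*\cF')_x$, the induced map $f^*\cF\to f^*\cF'$ is still a fiberwise-linear homeomorphism, and continuity of the norm is preserved because the norm on $f^*\cF'$ at $(x,v)$ is simply that of $v\in\cF'_{f(x)}$. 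By \Cref{pr:locpc.base.plbk} our $\cE$ is precisely such a pullback, along the classifying map $\psi$, of the canonical quotient bundle on the Grassmannian $\bG(\Gamma\xrightarrowdbl{}n)$; the latter is \emph{locally trivial} and its base is \emph{completely metrizable}, hence paracompact. It therefore suffices to unitarize a locally trivial $n$-homogeneous (F) Banach bundle $\cF\xrightarrowdbl{}Y$ over a paracompact Hausdorff $Y$.

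For such a $\cF$ I would produce a continuous field of inner products by the standard convexity-plus-partition-of-unity device. Choose an open cover $\{U_i\}$ trivializing $\cF$, so that each restriction $\cF|_{U_i}$ carries, by transporting a fixed inner product on the model fiber through the trivialization, a continuous fiberwise inner product $g_i$; then pick a locally finite partition of unity $\{\rho_i\}$ subordinate to $\{U_i\}$ (available since $Y$ is paracompact Hausdorff) and set $g:=\sum_i\rho_i\,g_i$. Each $g_i$ is a positive-definite Hermitian (resp.\ symmetric) form, the positive-definite forms on a given fiber constitute a convex cone, and at every point at least one weight $\rho_i$ is positive; hence $g$ is again a continuous field of genuine inner products on the fibers of $\cF$.

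Finally I would check that $g$ defines a bona fide (F) renorming. Writing $\vvvert\cdot\vvvert:=g(\cdot,\cdot)^{1/2}$, this new norm is continuous on the total space (being built from the continuous field $g$), and on each trivializing $U_i$ it is equivalent to the original fiber norm with locally controlled constants; consequently the two norms induce the same topology on the total space and the same fundamental system of neighborhoods of the zero section, so $\cF$ equipped with $\vvvert\cdot\vvvert$ is an (F) Hilbert bundle to which $\cF$ is equivalent via the identity. Pulling this renorming back along $\psi$ as in the first paragraph then unitarizes $\cE$. I expect the only genuinely non-formal points to be this last verification — confirming that the convexly-glued field $g$ yields an \emph{equivalent} structure that is still (F), i.e.\ that continuity is not lost — together with the structural insight that one must route through the metrizable Grassmannian precisely because $X$ itself need not be paracompact. (An alternative, coordinate-free route avoids the partition of unity altogether: assigning to each fiber the inner product cut out by the L\"owner--John ellipsoid of its unit ball is a \emph{canonical}, hence automatically continuous, choice, and it is this approach — carrying the sharp $\sqrt n$ Banach--Mazur bound — that underlies the quantitative strengthening.)
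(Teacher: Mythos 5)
Your proof is correct and takes essentially the same route as the paper's: both reduce via \Cref{pr:locpc.base.plbk} to the locally trivial canonical quotient bundle over the completely metrizable (hence paracompact) Grassmannian, observe that Hilbert-bundle structures pull back along the classifying map, and conclude from unitarizability of locally trivial bundles over paracompact bases. The only difference is cosmetic: where you execute the partition-of-unity gluing and the (F)-renorming check by hand, the paper simply cites \cite[Theorem 3.9.5, via Theorem 3.5.5]{hus_fib} for that standard fact.
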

\begin{proof}
  Recall that the {\it pullbacks} \cite[Chapter 3, construction (b)]{ms-cc} familiar from vector-bundle theory go through fine for Banach bundles (e.g. \cite[pp.22-23]{dg_ban-bdl}): given a categorical pullback \cite[Definition 11.8]{ahs}
\begin{equation*}
  \begin{tikzpicture}[>=stealth,auto,baseline=(current  bounding  box.center)]
    \path[anchor=base] 
    (0,0) node (l) {$\cF:=f^*\cE$}
    +(2,.5) node (u) {$\cE$}
    +(2,-.5) node (d) {$Y$}
    +(4,0) node (r) {$X$}
    ;
    \draw[->] (l) to[bend left=6] node[pos=.5,auto] {$\scriptstyle $} (u);
    \draw[->>] (u) to[bend left=6] node[pos=.5,auto] {$\scriptstyle \text{Banach bundle, (F) or (H)}$} (r);
    \draw[->>] (l) to[bend right=6] node[pos=.5,auto,swap] {$\scriptstyle $} (d);
    \draw[->] (d) to[bend right=6] node[pos=.5,auto,swap] {$\scriptstyle \text{continuous}$} (r);
  \end{tikzpicture}
\end{equation*}
of topological spaces, the bottom left map is again an (F) or respectively (H) Banach bundle. The same goes for Hilbert-bundle structures (they pull back), and the conclusion follows from \Cref{pr:locpc.base.plbk} together with the well-known fact \cite[Theorem 3.9.5, via Theorem 3.5.5]{hus_fib} that (locally trivial) vector bundles over paracompact (e.g. metrizable) base spaces are unitarizable.
\end{proof}

\begin{remarks}\label{res:badmflds}
  \begin{enumerate}[(1),wide]
  \item It is perhaps worth emphasizing that the unitarizability recorded in \Cref{cor:isunitrzbl} is specific to {\it Banach} bundles, as opposed to plain vector bundles: the latter exhibit all manner of pathologies when the base space is not paracompact, such as non-metrizability or failure of the bundle to be trivial even when the space is contractible. This is familiar from the study of non-metrizable (equivalently \cite[Theorem 2.1]{gauld_non-met-mfld}, non-paracompact) manifolds: per \cite[Volume 1, Appendix A to chapters 7, 9, 10, remark 4.]{spiv_dg_3e_1999}, {\it no} non-metrizable smooth manifold has unitarizable tangent bundle.

  \item Following up on the preceding observation, it should be clear how the aforementioned non-unitarizable tangent bundles are {\it not} counterexamples to \Cref{cor:isunitrzbl}: those tangent bundles also fail to carry (F) Banach-bundle structures.

  One need not appeal to \Cref{cor:isunitrzbl} to see this: a Banach-bundle structure on the tangent bundle $TM$ of a smooth (finite-dimensional) manifold $M$ {\it almost} defines a {\it (reversible) Finsler structure} on $M$ (\cite[Definition ]{shen_lec-fin_2001}, \cite[\S 1.1]{bcs_finsler_2000}, \cite[Volume 2, Chapter 4, Addendum on Finsler metrics]{spiv_dg_3e_1999}, etc.). `Almost' because Finsler structures are typically assumed smooth on the {\it slit tangent bundle} \cite[\S 1.1, p.2]{bcs_finsler_2000} $TM\setminus\left(\text{zero section}\right)$. Even assuming only continuity for the norm, as we are, one could define the asymmetric binary function
  \begin{equation*}
    d(x,y):=\inf_{\gamma} L(\gamma)
    ,\quad
    L(\gamma):=\int_{a}^b \|\dot{\gamma}(t)\|\ \mathrm{d}t
  \end{equation*}
  for piecewise smooth curves $\gamma$ connecting $\gamma(a)=x$ and $\gamma(b)=y$, and prove in the same fashion (as in \cite[Lemma 6.2.1]{bcs_finsler_2000}) that it is an asymmetric metric recovering the topology of $M$ (cf. \cite[\S 6.2 C]{bcs_finsler_2000}). Despite asymmetry, one has an estimate of the form 
  \begin{equation*}
    \exists C>0
    \quad:\quad
    \frac 1C d(y,x)\le d(x,y)\le C d(y,x)
    ,\quad
    \forall
    x,y\in M,
  \end{equation*}
  so that
  \begin{equation*}
    D(x,y):=\max\left(d(x,y),\ d(y,x)\right)
  \end{equation*}
  is a genuine metric topologizing $M$. In fact, by the same argument, not only can the tangent bundle of a non-metrizable manifold not be made into a Banach bundle, but will not even admit a weaker structure equipping the fibers with only {\it positively}-homogeneous ``norms'':
  \begin{equation*}
    \|\lambda p\| = \lambda\|p\| = |\lambda|\|p\|
    ,\quad \forall v\in \cE_x
    ,\quad \forall \lambda\in \bR_{>0}
  \end{equation*}
  only, rather than arbitrary scalars $\lambda$. 
  \end{enumerate}
\end{remarks}

To state the strengthening of \Cref{cor:isunitrzbl} alluded to above, we need a measure of ``how boundedly distinct'' two Banach bundles may be. It is a version of the familiar {\it Banach-Mazur distance} (\cite[\S 37]{tj_bm}, \cite[\S 2]{MR1793468}, etc.)
\begin{equation}\label{eq:bm.orig}
  d_{\cat{BM}}\big((E,\|\cdot\|),\ (E',\|\cdot\|')\big)
  :=
  \log\inf\left\{\|T\|\cdot \|T^{-1}\|\ :\ E\xrightarrow[\text{top-linear isomorphism}]{T}E'\right\}
\end{equation}
between Banach spaces, transported over to Banach $C_b(X)$-modules.

\begin{definition}\label{def:bm.dist.mod}
  Let $\cE,\cE'\xrightarrowdbl{}X$ be two Banach bundles over the same base space.
  \begin{enumerate}[(1),wide]
  \item The {\it bounded Banach-Mazur distance} between the two is
  \begin{equation*}
    d_{b\cat{BM}}(\cE,\cE')
    :=
    \log\inf_{T}\|T\|\cdot \|T^{-1}\|
  \end{equation*}
  for
  \begin{equation*}
    \Gamma_{b}(\cE)
    \xrightarrow[\quad\text{Banach $C_b(X)$-module isomorphism}\quad]{T}
    \Gamma_{b}(\cE')
  \end{equation*}
  induced by renormings \Cref{eq:renorm}.

\item In similar fashion, for locally compact Hausdorff $X$ we define the {\it 0-bounded Banach-Mazur distance} $d_{0\cat{BM}}$ with $\Gamma_0$ in place of $\Gamma_b$ throughout. 
  \end{enumerate}
\end{definition}

\begin{remark}
  The proof of \Cref{le:bdd>0bdd} makes it clear that $d_{b\cat{BM}}\ge d_{0\cat{BM}}$, so upper bounds on the former quantity impose upper bounds on the latter. 
\end{remark}

Finally, we have the following quantified improvement on \Cref{cor:isunitrzbl}. 

\begin{theorem}\label{th:isbddunit}
  For an $n$-homogeneous Banach bundle $\cE\xrightarrowdbl{}X$ over a locally paracompact $T_2$ base space we have
  \begin{equation*}
    \inf\left\{d_{b\cat{BM}}(\cE,\ \cE')\ |\ \text{$\cE'\xrightarrowdbl{}X$ Hilbert $n$-homogeneous}\right\}
    \le
    \frac 12 \log n.
  \end{equation*}  
\end{theorem}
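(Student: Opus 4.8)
The plan is to build the competing Hilbert bundle $\cE'$ by renorming each fiber $\cE_x$ through its (L\"owner--)John ellipsoid. The point of using this particular inscribed ellipsoid is twofold: it is \emph{canonical}, hence will vary continuously with $x$, and John's theorem supplies precisely the constant $\sqrt n$. Concretely, for a real $n$-dimensional normed space $(\cE_x,\|\cdot\|_x)$ there is a unique ellipsoid of maximal volume inscribed in the unit ball; writing $\|\cdot\|_{2,x}$ for the Euclidean norm with that ellipsoid as unit ball, John's inequality reads
\begin{equation*}
  \tfrac{1}{\sqrt n}\,\|v\|_{2,x}\ \le\ \|v\|_x\ \le\ \|v\|_{2,x}
  ,\quad v\in\cE_x.
\end{equation*}
In the complex case one instead maximizes volume over ellipsoids invariant under $v\mapsto\lambda v$, $|\lambda|=1$ (equivalently, unit balls of Hermitian inner products), and the same uniqueness and the same bound persist. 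It is this uniqueness that makes the assignment $\|\cdot\|_x\mapsto\braket{\cdot\mid\cdot}_x$ intrinsic, referencing no choice of coordinates.

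First I would promote this pointwise construction to a continuous field of inner products. By \Cref{re:loctriv.over.locparaco} the bundle is locally trivial, so over a trivializing open $U$ the fibers are identified (fiberwise-linearly and homeomorphically, though \emph{not} isometrically) with a fixed $V\cong\bR^n$ or $\bC^n$ carrying norms $\|\cdot\|_x$ that depend continuously on $x\in U$. It then suffices to know that the map sending a norm on $V$ to its John inner product is continuous, which follows in the usual way from uniqueness together with compactness: along a convergent net of norms the corresponding maximal-volume inscribed ellipsoids have a convergent subnet, and any limit is inscribed in, and of maximal volume for, the limiting unit ball, hence equals its John ellipsoid. Because the John inner product is intrinsic, the continuous local fields so obtained agree on overlaps and glue to a global continuous inner product $\braket{\cdot\mid\cdot}$ on $\cE$, with continuous fiberwise norm $\|\cdot\|_2$.

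Next I would check that $\cE':=(\cE,\|\cdot\|_2)$ is a bona fide (F) Hilbert bundle, equivalent to $\cE$ through the renorming \Cref{eq:renorm} realized by the identity on total spaces. The displayed estimate holds uniformly in $x$, so $\|\cdot\|$ and $\|\cdot\|_2$ are uniformly equivalent; consequently they determine the same bundle topology, the zero-neighborhood-basis axiom transfers verbatim, and the identity is a fiberwise-linear horizontal homeomorphism. The uniform fiberwise bound then passes to suprema: for the induced $C_b(X)$-module isomorphism $T:=\id\colon\Gamma_b(\cE)\to\Gamma_b(\cE')$ one gets $\|T\|\le\sqrt n$ and $\|T^{-1}\|\le 1$, whence $\|T\|\cdot\|T^{-1}\|\le\sqrt n$ and, per \Cref{def:bm.dist.mod}, $d_{b\cat{BM}}(\cE,\cE')\le\log\sqrt n=\tfrac12\log n$.

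I expect the continuity step to be the main obstacle: everything rests on upgrading the pointwise John ellipsoids to a continuous (indeed (F)) field of inner products. Uniqueness of the maximal-volume ellipsoid is exactly what makes this possible, for without a canonical, coordinate-free choice there would be nothing to glue; the compactness/uniqueness argument and the verification of the (F) continuity axiom for $\|\cdot\|_2$ are where the genuine work lies, the Banach--Mazur estimate itself being immediate from John's inequality once the Hilbert bundle is in hand.
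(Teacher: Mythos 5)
Your proposal is correct and follows essentially the same route as the paper: the paper renorms each fiber via the canonical ellipsoid of its unit ball (choosing the circumscribed minimal-volume L\"owner ellipsoid where you choose the dual inscribed John ellipsoid --- an immaterial difference, since the paper's continuity result, \Cref{pr:contellips}, proves Vietoris-continuity for both and calls the two cases ``entirely parallel''), localizes via triviality over the locally paracompact base exactly as you do, and concludes with the same uniform estimate $\|T\|\cdot\|T^{-1}\|\le\sqrt n$ from John's theorem. The only divergent detail is how the limit ellipsoid is identified in the uniqueness-plus-compactness continuity step --- you appeal to volume maximality of a subnet limit of inscribed ellipsoids (which needs, but routinely admits, a small perturbation argument ruling out a volume drop or degeneration in the limit), whereas the paper invokes the contact-point characterization $\sum_i c_{i}\, v_{i}\otimes v_{i}^{*}=\id$, whose defining conditions are manifestly closed under limits --- but both are standard and correct.
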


In particular:

\begin{corollary}\label{cor:isbddunit}
  Homogeneous (F) Banach bundles over locally paracompact base spaces are boundedly unitarizable in the sense of \Cref{def:untrzbl.types}\Cref{item:def:untrzbl.types:bdd}.  \qedhere
\end{corollary}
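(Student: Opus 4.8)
The plan is to renorm $\cE$ fibrewise by the (L\"owner--)John ellipsoids of its unit balls, to observe that the uniqueness of those ellipsoids forces the resulting family of Hilbert norms to vary continuously, and thus to assemble an (F) Hilbert bundle $\cE'$ over the same base; the Banach--Mazur estimate is then read off fibrewise. So the first move is the fibrewise one. For each $x$ the unit ball $B_x\subseteq\cE_x$ of $\|\cdot\|_x$ is a symmetric convex body in the $n$-dimensional space $\cE_x$ (in the complex case it is moreover invariant under multiplication by unit scalars). John's theorem provides the \emph{unique} maximal-volume inscribed ellipsoid $\cE^J_x$, with the sandwiching $\cE^J_x\subseteq B_x\subseteq\sqrt n\,\cE^J_x$ in the version of the theorem appropriate to the base field (the real John theorem for $\bR$, with $n$ the real dimension; the complex John theorem for $\bC$, with $n$ the complex dimension, as one checks already on the polydisk $B_{\ell^\infty_n(\bC)}$). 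Writing $\|\cdot\|_{2,x}$ for the Euclidean, respectively Hermitian, norm whose unit ball is $\cE^J_x$, the inclusions translate into
\begin{equation*}
  \|v\|_x\le\|v\|_{2,x}\le\sqrt n\,\|v\|_x,\qquad \forall v\in\cE_x.
\end{equation*}
In the complex case the scalar-circle invariance of $B_x$ together with the uniqueness of $\cE^J_x$ forces the latter to be circle-invariant as well, so that $\|\cdot\|_{2,x}$ does arise from a genuine Hermitian inner product $\braket{-\mid-}_x$, as required for a Hilbert bundle.

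The main obstacle is continuity: I must check that $\cE':=(\cE,\ \|\cdot\|_2)$ really is an (F) bundle, i.e. that $\|\cdot\|_2$ is continuous on the total space $\cE$. Since continuity of the norm is a local condition on $X$, I would invoke local triviality of homogeneous (F) bundles (\Cref{re:loctriv.over.locparaco}, or directly the pullback presentation of \Cref{pr:locpc.base.plbk}) to reduce to the model $\cE|_U\cong U\times V$ for a fixed $n$-dimensional normed space model $V$, in which the only datum varying with $x\in U$ is the norm $\|\cdot\|_x$ on $V$, and it varies continuously. The question then collapses to a purely finite-dimensional assertion: a continuously varying family of symmetric (circle-invariant) convex bodies $B_x\subseteq V$ has continuously varying John ellipsoids, equivalently the associated positive-definite forms depend continuously on $x$. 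This is exactly where uniqueness pays off: the John ellipsoid is the unique maximizer of the (continuous) volume functional over the compact set of inscribed ellipsoids, and a standard upper-semicontinuity/closed-graph argument promotes uniqueness of the maximizer to continuous dependence on $B_x$ in the Hausdorff metric. I expect the clean bookkeeping of ``continuously varying convex body'' through the local trivializations, and the citation of this stability property of the John ellipsoid, to be the technical heart of the proof.

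With $\cE'$ in hand the estimate is immediate. The identity on the common underlying family of sections is a bijective $C_b(X)$-module map
\begin{equation*}
  T:=\id\ :\ \Gamma_b(\cE)\xrightarrow{\quad}\Gamma_b(\cE'),
\end{equation*}
induced by the renorming \Cref{eq:renorm}. Passing the fibrewise bounds $\|v\|_x\le\|v\|_{2,x}\le\sqrt n\,\|v\|_x$ to suprema over $x\in X$ yields $\|T\|\le\sqrt n$ and $\|T^{-1}\|\le 1$, so that $\|T\|\cdot\|T^{-1}\|\le\sqrt n$. By the definition of the bounded Banach--Mazur distance (\Cref{def:bm.dist.mod}) this gives $d_{b\cat{BM}}(\cE,\cE')\le\tfrac12\log n$, and taking the infimum over Hilbert $n$-homogeneous renormings $\cE'$ proves the claim.
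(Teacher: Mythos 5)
Your proposal is correct and takes essentially the same route as the paper: \Cref{cor:isbddunit} is deduced there from \Cref{th:isbddunit}, whose proof renorms each fiber by an ellipsoid of the unit ball (the circumscribed L\"owner ellipsoid rather than your inscribed John ellipsoid --- a dual choice, and \Cref{pr:contellips} explicitly treats both cases in parallel), with the same fibrewise $\sqrt n$ bound from \cite[Theorem 15.5, equation (15.9)]{tj_bm} and the same reading-off of $\|T\|\cdot\|T^{-1}\|\le\sqrt n$ for $T=\id$. The only divergence is in the continuity step, and it is minor: where you invoke Hausdorff-metric stability of the John ellipsoid via uniqueness of the volume maximizer, the paper passes to a subnet limit of the ellipsoids (after the same reduction to a single ambient $n$-dimensional space via local sections) and identifies the limit through the contact-point/decomposition-of-identity characterization --- both are instances of the same compactness-plus-uniqueness mechanism.
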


And also, piecing together \Cref{cor:isbddunit} and the earlier \Cref{le:bdd>0bdd} into the motivating consequence:

\begin{corollary}\label{cor:is0bddunit}
  Homogeneous (F) Banach bundles over locally compact base spaces are 0-boundedly unitarizable in the sense of \Cref{def:untrzbl.types}\Cref{item:def:untrzbl.types:0bdd}.  \qedhere
\end{corollary}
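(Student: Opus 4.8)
The plan is to derive this corollary with no further bundle-theoretic input, simply by chaining the two results that precede it: the bounded unitarizability of \Cref{cor:isbddunit} and the upgrade ``boundedly $\Rightarrow$ 0-boundedly unitarizable'' supplied by \Cref{le:bdd>0bdd}. The entire content is therefore a hypothesis check, verifying that a locally compact Hausdorff base meets the standing assumptions of both.

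First I would note that a locally compact Hausdorff space $X$ is in particular locally paracompact $T_2$ in the sense of \cite[Definition 3.1]{gierz_bdls}: every point of $X$ has a compact neighborhood, and such a neighborhood is closed (by Hausdorffness) and paracompact (being compact Hausdorff). Consequently the homogeneous (F) bundle $\cE\xrightarrowdbl{}X$ falls under \Cref{cor:isbddunit}, so that it is boundedly unitarizable; explicitly, it admits a Hilbert-bundle renorming \Cref{eq:renorm} for which the identity \Cref{eq:id.norm2norm} on bounded sections is a topological isomorphism.

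It remains to pass from $\Gamma_b$ to $\Gamma_0$, and here I would invoke \Cref{le:bdd>0bdd}, whose hypotheses --- an LCH base together with a boundedly unitarizable (F) bundle --- are now precisely in force. That lemma transports the topological isomorphism on $\Gamma_b$ to the subspaces $\Gamma_0$ of sections vanishing at infinity (via the dense compactly-supported sections $\Gamma_{00}$ and $C_0(X)$-linearity), which is exactly 0-bounded unitarizability in the sense of \Cref{def:untrzbl.types}\Cref{item:def:untrzbl.types:0bdd}.

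The only place that requires attention --- and the nearest thing to an obstacle --- is ensuring that the local compactness hypothesis of \Cref{le:bdd>0bdd} and the local paracompactness hypothesis of \Cref{cor:isbddunit} are simultaneously available for the same $X$; as observed above, both follow automatically from $X$ being LCH, so the two results dovetail and the corollary is immediate.
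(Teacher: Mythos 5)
Your proposal is correct and is exactly the paper's own (implicit) argument: the corollary is stated as an immediate consequence of piecing together \Cref{cor:isbddunit} and \Cref{le:bdd>0bdd}, with the only content being the observation that an LCH space is locally paracompact $T_2$, which you verify correctly (compact neighborhoods are closed by Hausdorffness and paracompact by compactness). Nothing further is needed.
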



\pf{th:isbddunit}
\begin{th:isbddunit}
  The respective unit balls $\cE_{x,1}\subset \cE_x$, $x\in X$ of the fibers are origin-symmetric {\it convex bodies} \cite[p.8]{schn_cvx_2e_2014} (i.e. compact convex non-empty sets) with non-empty interior. As such, they have uniquely-determined {\it (L\"owner-)John ellipsoids} $K^L(\cE_{x,1})$ \cite[\S 10.12]{schn_cvx_2e_2014} (also \cite[\S 15]{tj_bm}, which covers the complex case as well): unique minimal-volume ellipsoids containing them.

  We relegate to \Cref{pr:contellips} below the fact that the map
  \begin{equation*}
    X\ni x    
    \xmapsto{\quad}
    K^L(\cE_{x,1})
    \in
    \cat{Cl}(\cE)
    :=
    \left\{\text{closed subsets of $\cE$}\right\}
  \end{equation*}
  is continuous for the {\it Vietoris topology} \cite[\S 1.2]{ct_vietoris} on $\cat{Cl}(\cE)$, by definition having 
  \begin{equation*}
    \begin{aligned}
      U^+
      &:=
        \left\{F\in\cat{Cl}(\cE)\ |\ F\cap U\ne \emptyset\right\}
      \ \text{and}\\
      U^-
      &:=
        \left\{F\in\cat{Cl}(\cE)\ |\ F\subseteq U\right\}
    \end{aligned}
    ,\quad\text{open }U\subseteq \cE
  \end{equation*}
  as sub-basic open sets.

  The $K^L(\cE_{x,1})$ are thus the unit balls of Hilbert-space norms on the fibers $\cE_x$ induced by an (F) Hilbert bundle structure (different, generally, from the one we assumed given initially). We now have the uniform (in $x\in X$) estimates
  \begin{equation*}
    \begin{tikzpicture}[>=stealth,auto,baseline=(current  bounding  box.center)]
      \path[anchor=base] 
      (0,0) node (l) {$\cE_{x,1}$}
      +(8,0) node (r) {$K^L(\cE_{x,1})$}
      ;
      \draw[->] (l) to[bend left=6] node[pos=.5,auto] {$\scriptstyle \|\id\|\le 1\text{: obvious}$} (r);
      \draw[->] (r) to[bend left=6] node[pos=.5,auto] {$\scriptstyle \|\id\|\le {\sqrt{n}}\text{: \cite[Theorem 15.5, equation (15.9)]{tj_bm}}$} (l);
    \end{tikzpicture}
  \end{equation*}
  and the identity on $\cE$ is indeed a topological and linear isomorphism between the original Banach module and the newly-defined Hilbert $C_b(X)$-module.
\end{th:isbddunit}

\begin{remarks}\label{res:l.vs.j}
  \begin{enumerate}[(1),wide]
  \item The notation $K^L$ in the proof of \Cref{th:isbddunit} follows the convention adopted in \cite[\S 10.12]{schn_cvx_2e_2014}, where the (unique) minimal-volume ellipsoid containing a given convex body is named after L\"owner, whereas its dual counterpart, the maximal-volume ellipsoid contained therein (the $K_J$ of \Cref{pr:contellips} below), is named after John.

    As noted there, this is not necessarily historically accurate (\cite{zbMATH06155083} covers some of that history), and authors might differ on terminology.

  \item Ellipsoids can be defined, once a inner product {\it has} been fixed, as the images of the unit ball through {\it affine} transformations
    \begin{equation*}
      \bR^n\ (\bC^n)
      \xrightarrow{\quad(\text{translation})\circ(\text{invertible linear})\quad}
      \bR^n\ (\text{respectively }\bC^n).
    \end{equation*}
    One need not, however, have fixed an inner product beforehand, as ellipsoids can also be defined intrinsically any number of ways; the origin-symmetric ones, for instance (from which all others can be obtained by translation) are
    \begin{itemize}[wide]
    \item the sets
      \begin{equation*}
        \left\{\sum \lambda_i v_i\ \big|\ \sum|\lambda_i|^2\le 1\right\}
      \end{equation*}
      for fixed finite tuples $(v_i)$ of vectors (cf. \cite[\S 1]{zbMATH02152978});

    \item alternatively, those origin-symmetric convex bodies whose symmetry group is maximal compact in the general linear group of the ambient space (so a conjugate in $GL(\bR\text{ or }\bC)$ of the corresponding orthogonal/unitary group). 
    \end{itemize}
  \end{enumerate}  
\end{remarks}

The continuity of the map
\begin{equation*}
  \left(\text{$n$-dimensional convex body }K\subset \bR^n\right)
  \xmapsto{\quad}
  \left(\text{L\"owner ellipsoid }K^L(K)\right)
\end{equation*}
with respect to the Hausdorff distance on the space of convex bodies in $\bR^n$ is attributed by \cite[\S 2]{zbMATH02121437}, \cite[p.607]{zbMATH02120325}, \cite[\S 5]{MR1793468} and so on to \cite{zbMATH01614358}). Because
\begin{itemize}[wide]
\item on the one hand \cite{zbMATH01614358} has proven somewhat difficult to obtain;

\item on the other hand, the present context is somewhat broader, with ellipsoids respectively housed by continuously-varying fibers but not regarded as inhabiting a single ambient metric space that affords a Hausdorff metric (though this will not be much of an issue in first instance, given local triviality; cf. proof of \Cref{pr:contellips});

\item and finally, it is convenient to address the real and complex versions of the statement simultaneously,
\end{itemize}
we record in \Cref{pr:contellips} an auxiliary result used in passing in the proof of \Cref{th:isbddunit} above. {\it Vertical} subsets (of the total space) of a Banach bundle $\cE$ are those contained in a single fiber $\cE_x$ for $x\in \text{base space }X$. Recalling (\cite[p.14]{dg_ban-bdl}, \cite[Axiom V following Definition 3.3]{zbMATH03539905}) that {\it full} Banach bundles are those whose points all belong to images of global sections, we adopt some short-hand terminology for a weaker concept (\cite[p.61, Axiom IV]{zbMATH03539905} or \cite[\S 1.2, Existence Axiom II]{hk_shv-bdl}):

\begin{definition}\label{def:wfull}
  A Banach bundle $\cE\xrightarrowdbl{} X$ is {\it approximately (or weakly) full} if for every $p\in \cE_x$ and every $\varepsilon$ there is a local section defined in a neighborhood of $x\in X$ whose value at $x$ is $\varepsilon$-close to $p$. 
\end{definition}

\begin{proposition}\label{pr:contellips}
  For a weakly full $n$-homogeneous (F) Banach bundle $\cE\xrightarrowdbl{} X$ be over any space the maps
  \begin{equation*}
    \left(\text{vertical $n$-dimensional convex bodies}\right)
    =:
    \cK^n(\cE)
    \ni K    
    \xmapsto{\quad}
    \begin{cases}
      \text{L\"owner ellipsoid }
      K^L(K)
      \\
      \text{John ellipsoid }
      K_J(K)
      \\
    \end{cases}
    \in \cK^n(\cE)
  \end{equation*}
  are Vietoris-continuous.
\end{proposition}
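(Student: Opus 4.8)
The plan is to reduce the bundle-theoretic statement to the classical fact that, in a fixed Euclidean space, the Löwner and John ellipsoid maps are continuous for the Hausdorff metric, and to engineer that reduction through local triviality, which I claim is automatic here even though the base is arbitrary.

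First I would observe that the hypotheses already force local triviality, with no appeal to paracompactness. Fix $x_0\in X$; weak fullness supplies local sections $s_1,\dots,s_n$ near $x_0$ whose values $s_i(x_0)$ approximate — hence, being a basis being an open condition, actually form — a basis of $\cE_{x_0}$. The function $x\mapsto \inf\{\|\sum_i a_i s_i(x)\|:\sum_i|a_i|^2=1\}$ is, by continuity of the norm on $\cE$ (the (F) axiom) together with compactness of the unit sphere, continuous and positive at $x_0$, hence positive on a neighborhood $U$; so the $s_i(x)$ stay a basis throughout $U$. The resulting continuous fiberwise-linear bijection $\tau\colon U\times\bK^n\to\cE|_U$ (with $\bK$ the ground field) is a homeomorphism: its inverse is continuous by a routine boundedness/normalization argument using only continuity of the norm and of the bundle operations. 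This re-derives, over an arbitrary base, the local triviality of \Cref{re:loctriv.over.locparaco}.

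Next I would translate Vietoris convergence into the trivialization, checking continuity by nets. Let $K_\alpha\to K$ in $\cK^n(\cE)$ with $K\subseteq\cE_{x_0}$; for large $\alpha$ the $K_\alpha$ lie over points $x_\alpha\in U$, and $\tau$ carries them to $\{x_\alpha\}\times\tilde K_\alpha$ and $\{x_0\}\times\tilde K$ with $\tilde K_\alpha,\tilde K\subseteq\bK^n$. Testing the lower and upper Vietoris conditions against product opens $V\times W$ shows that $K_\alpha\to K$ is equivalent to $x_\alpha\to x_0$ together with $\tilde K_\alpha\to\tilde K$ in Hausdorff distance inside the fixed space $\bK^n$ — exactly the point at which a genuine ambient metric becomes available, as anticipated in the discussion preceding the statement. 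Since $\tau$ is compatible with the Löwner/John constructions, it then suffices to prove the classical continuity statement in $\bK^n$, handling $\bK=\bR$ and $\bK=\bC$ uniformly by regarding the complex fiber as $\bR^{2n}$ and the ellipsoids as real ellipsoids therein.

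Finally I would prove the classical continuity directly, to avoid depending on the hard-to-obtain \cite{zbMATH01614358}. Given $\tilde K_\alpha\to\tilde K$ (Hausdorff) with all bodies full-dimensional, the limit $\tilde K$ has nonempty interior, so the $\tilde K_\alpha$ are eventually uniformly sandwiched between two fixed balls; minimal-volume containment sandwiches the $K^L(\tilde K_\alpha)$ likewise, so by Blaschke selection every subnet has a Hausdorff-convergent sub-subnet whose limit $E$ is again a non-degenerate ellipsoid (ellipsoids are Hausdorff-closed among non-degenerate convex bodies) and contains $\tilde K$. Continuity of volume under Hausdorff convergence, combined with the volume-minimality defining $K^L$ and a comparison against $K^L(\tilde K)$ dilated slightly so as to contain the $\tilde K_\alpha$, forces $\mathrm{vol}(E)=\mathrm{vol}(K^L(\tilde K))$; uniqueness of the minimal ellipsoid then gives $E=K^L(\tilde K)$. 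As every subnet has a sub-subnet with this same limit, $K^L(\tilde K_\alpha)\to K^L(\tilde K)$, and the John case is identical with ``maximal-volume contained'' in place of ``minimal-volume containing''. The main obstacle is this last compactness-plus-uniqueness step — in particular arranging the uniform non-degeneracy that keeps the limiting ellipsoids genuine and lets uniqueness bite — while the passage through local triviality and the Vietoris-to-Hausdorff translation are comparatively formal.
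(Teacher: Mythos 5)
Your proof is correct, and it reaches the conclusion by a genuinely different key step than the paper's. The skeleton is shared: the paper likewise trivializes locally (local sections through a basis of $\cE_x$ remain linearly independent on nearby fibers, citing \cite[Proposition 13.15]{fd_bdl-1} and \cite[p.231]{dd} rather than re-deriving the homeomorphism as you do), performs your Vietoris-to-Hausdorff translation implicitly by declaring that ``everything in sight inhabits a single $n$-dimensional vector space'', and then runs the same compactness/subnet scheme, so that everything hinges on identifying a subnet limit $K'\supseteq K$ of the $K^L(K_\lambda)$ with $K^L(K)$. At that point the paper invokes John's characterization of the L\"owner ellipsoid (\cite[Theorem 15.4]{tj_bm}, \cite[Theorem 10.12.1]{schn_cvx_2e_2014}): contact points $v_{i,\lambda}\in\partial K_\lambda\cap\partial K^L(K_\lambda)$ and weights $c_{i,\lambda}>0$ with $\sum_i c_{i,\lambda}\,v_{i,\lambda}\otimes v_{i,\lambda}^{*,\lambda}=\id$, these conditions being closed under passage to limits after a further subnet extraction; you instead use only existence and uniqueness of the extremal ellipsoid together with continuity of volume under Hausdorff convergence and the dilation comparison. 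Your route is more elementary---no decomposition-of-identity theorem is needed---and it makes explicit the uniform non-degeneracy (sandwiching between fixed balls, semi-axes bounded above and below) that the paper leaves tacit in its ``no harm in assuming relative compactness'' step. What the paper's route buys is a clean simultaneous treatment of the real and complex cases, since the characterization is stated for Hermitian ellipsoids with the bound $N\le\dim^2 V$ over $\bC$. Your one imprecision sits exactly there: in the complex case $K^L(K)$ is the minimal-volume \emph{Hermitian} ellipsoid, and for a vertical body that is not circled this differs in general from the real L\"owner ellipsoid of the body viewed in $\bR^{2n}$, so ``regarding the ellipsoids as real ellipsoids'' is not quite the right reduction. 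The repair is immediate: run your volume-plus-uniqueness argument with the competitor class restricted to complex ellipsoids, which is equally Hausdorff-closed among uniformly non-degenerate bodies and still admits a unique minimal-volume member (alternatively, for the circled unit balls actually arising in \Cref{th:isbddunit}, uniqueness plus averaging over the circle action shows the real and Hermitian L\"owner ellipsoids coincide). This is a phrasing slip rather than a gap, and with it mended your argument stands as a complete, self-contained alternative to the paper's proof.
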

\begin{proof}
  We treat the L\"owner case, the other claim being entirely parallel. 
  
  Consider
  \begin{equation*}
    X\ni
    x_{\lambda}
    \xrightarrow[\lambda]{\quad\text{convergent {\it net} \cite[Definition 11.2]{wil_top}}\quad}
    x
    \in X.
  \end{equation*}
  with $n$-dimensional convex bodies $K_{\lambda}\subset \cE_{x_{\lambda}}$ Vietoris-converging to $K\in \cK^n(\cE_x)$. The hypotheses suffice to trivialize $\cE$ locally around $x$: given a basis $(e_i)_{i=1}^n\subset \cE_x$, local sections around $x$ taking values respectively close (or indeed, equal \cite[Proposition 13.15]{fd_bdl-1}) to $e_i$ will be linearly independent (and hence a basis) in all fibers above points sufficiently close to $x$ (as argued in \cite[p.231, Remarque preceding \S 2]{dd}, say). It follows that we may as well assume everything in sight inhabits a single $n$-dimensional real or complex vector space $E:=\cE_x$.

  There is no harm in assuming $\{K_{\lambda}\}_{\lambda}$ relatively compact in the Hausdorff topology on compact subsets of $E$, whence also the relative compactness of $\{K^L(K_{\lambda})\}_{\lambda}$. Passing to a subnet if necessary, we can thus assume \cite[Theorem 17.4]{wil_top}
  \begin{equation*}
    K^L(K_{\lambda})
    \xrightarrow[\quad\lambda\quad]{}
    \text{ellipsoid }K'\text{ containing }K.
  \end{equation*}
  That $K'$ is exactly the L\"owner ellipsoid $K^L(K)$ then follows from the latter's characterization (\cite[Theorem 15.4]{tj_bm}, \cite[Theorem 10.12.1]{schn_cvx_2e_2014}, etc.): for every $\lambda$ there are
  \begin{equation}\label{eq:char.ellips}
    \begin{aligned}
      (v_{i,\lambda})_{i=1}^N
      &\subset
        \partial K_{\lambda}\cap\partial K^L(K_{\lambda})\\
      (c_{i,\lambda})_{i=1}^N
      &\subset
        \bR_{> 0}
    \end{aligned}    
    \quad\text{with}\quad
    \sum_i c_{i,\lambda} v_{i,\lambda}\otimes v_{i,\lambda}^{*,\lambda}
    =
    \id\in \End(V)\cong V\otimes V^*,
  \end{equation}
  where
  \begin{equation*}
    V\ni v
    \xmapsto[\quad\cong\quad]{\quad}
    v^{*,\lambda}\in V^*
  \end{equation*}
  is the isomorphism induced by the inner product $\braket{-\mid-}_{\lambda}$ whose underlying norm has $K^L(K_{\lambda})$ as its unit ball and
  \begin{equation*}
    \dim V\le N\le
    \begin{cases}
      \frac{\dim V\cdot (\dim V+1)}2&\text{over $\bR$}\\
      \dim^2 V&\text{over $\bC$}.
    \end{cases}
  \end{equation*}
  After once more passing to an appropriate subnet we can assume all $(v_{i,\lambda})_{\lambda}$ and $(c_{i,\lambda})_{\lambda}$ convergent (the $c_i$ sum up to $n$ \cite[(10.112)]{schn_cvx_2e_2014}, so the tuples $(c_i)_i$ range over a relatively compact subspace of $\bR^n$), and the conditions \Cref{eq:char.ellips} are closed under passing to limits. 
\end{proof}


\section{Expectations onto function spaces}\label{se:exp}

As a joint perusal of \cite{MR3056657,bg_cx-exp} makes clear, unitarizing Banach bundles (\cite[Problem 4.7]{MR3056657}) is intimately linked with the existence of finite-index conditional expectations $\Gamma_b(\cA)\xrightarrow{E}C(X)$ for continuous (unital) $C^*$-algebra bundles $\cA\xrightarrowdbl{}X$ (\Cref{qu:bg_pb3.11}).

\begin{remark}
  The relevant inequality in \Cref{qu:bg_pb3.11}\Cref{item:qu:bg_pb3.11:opt.ind} is $K(E)\le r(\cA)$, the opposite having been established in \cite[Theorem 1.4]{bg_cx-exp}. For that reason, we will occasionally refer to expectations $E$ as in \Cref{qu:bg_pb3.11} as {\it optimal}. The term applies equally to states on finite-dimensional $C^*$-algebras ({\it the} optimal state, for it is unique \cite[Lemma 3.2(ii)]{bg_cx-exp}), as does the notation $r(\cdot)$: simply regard such algebras as bundles over singletons. 
\end{remark}

The following construction answers \Cref{qu:bg_pb3.11} negatively.

\begin{example}\label{ex:bg_pb3.11_no}
  The base space of the (complex, unital $C^*$) bundle will be $X:=[-1,1]$, with $\cA\xrightarrowdbl{}X$ defined as follows.
  \begin{itemize}[wide]
  \item Over $U:=X\setminus\{0\}$ the bundle is trivial:
    \begin{equation*}
      \cA|_{U}\cong U\times M_3
      ,\quad
      U:=X\setminus\{0\} = [-1,0)\cup (0,1]. 
    \end{equation*}

  \item The exceptional fiber $\cA_0$ is 2-dimensional (so $\cA_x\cong \bC^2$).

  \item And the bundle is assembled via the usual gadget (\cite[\S 13.18]{fd_bdl-1}, \cite[Proposition 1.3]{dg_ban-bdl}, \cite[Proposition 3.6]{zbMATH03539905}) of singling out a space of sections whose set of images is dense in every fiber.

    In the present case, we will describe the {\it entire} section space $\Gamma_b(\cA)$ as
    \begin{equation}\label{eq:gammaa}
      \Gamma(\cA)
      =
      \Gamma_{\ell}\times_{\bC^2}\Gamma_r,
    \end{equation}
    where
    \begin{equation}\label{eq:gammas}
      \begin{aligned}
        \Gamma_{\ell}
        &:=
          \left\{[-1,0]\xrightarrow[\text{continuous}]{f}M_3\ |\ f(0)=\mathrm{diag}(a,a,b)\quad\text{for some}\quad a,b\in \bC \right\}\\
        \Gamma_{r}
        &:=
          \left\{[0,1]\xrightarrow[\text{continuous}]{f}M_3\ |\ f(0)=\mathrm{diag}(a,b,b)\quad\text{for some}\quad a,b\in \bC \right\},\\
      \end{aligned}
    \end{equation}
    and the maps $\Gamma_{\ell,r}\to \bC^2$ restrict functions at $0$ and identify the displayed diagonal matrices with $(a,b)\in \bC^2$. 
  \end{itemize}
  The claim is that this is a counterexample to the converse to \cite[Theorem 1.4]{bg_cx-exp}, thus providing a negative answer to \cite[Problem 3.11]{bg_cx-exp}: $r(\cA)=3$, but a conditional expectation $\Gamma(\cA)\xrightarrow{E}C(X)$ with positive $3E-\id$ would have \cite[Lemma 3.2(ii)]{bg_cx-exp} to be the normalized trace over the non-degenerate locus, and hence cannot extend across the exceptional fiber at $0$.
\end{example}

\begin{remarks}\label{res:gluemods}
  \begin{enumerate}[(1),wide]
  \item Specifying the $C(X)$-module $\Gamma(\cA)$ gives a complete characterization of the bundle via the correspondence \cite[Scholium 6.7]{hk_shv-bdl} between bundles and {\it locally convex} \cite[\S 6.1]{hk_shv-bdl} Banach $C(X)$-modules for compact Hausdorff $X$. Local convexity is in this case an easy check. 

  \item The construction of $\Gamma(\cA)$ as a pullback \Cref{eq:gammaa} is a familiar one \cite[p.20, Basic construction]{miln_k}: we have a pullback representation
    \begin{equation}\label{eq:rng.plb}
      \begin{tikzpicture}[>=stealth,auto,baseline=(current  bounding  box.center)]
        \path[anchor=base] 
        (0,0) node (l) {$C([-1,0])$}
        +(3,.5) node (u) {$C([-1,1])$}
        +(3,-.5) node (d) {$\bC\cong C(\{0\})$}
        +(6,0) node (r) {$C([0,1])$}
        ;
        \draw[<<-] (l) to[bend left=6] node[pos=.5,auto] {$\scriptstyle $} (u);
        \draw[->>] (u) to[bend left=6] node[pos=.5,auto] {$\scriptstyle $} (r);
        \draw[->>] (l) to[bend right=6] node[pos=.5,auto,swap] {$\scriptstyle $} (d);
        \draw[<<-] (d) to[bend right=6] node[pos=.5,auto,swap] {$\scriptstyle $} (r);
      \end{tikzpicture}
    \end{equation}
    for the ring $C(X)$, with all maps denoting restrictions. The spaces $\Gamma_{\ell}$ and $\Gamma_r$ of \Cref{eq:gammas} are then section spaces for bundles over $X_{\ell}:=[-1,0]$ and $X_r:=[0,1]$ respectively, and hence modules over the lateral rings in \Cref{eq:rng.plb}. Restriction of sections at $0$ can be identified with tensoring
    \begin{equation}\label{eq:gamma.tenss}
      \Gamma_{\ell}\otimes_{C(X_{\ell})}\bC
      \quad\text{and}\quad
      \Gamma_{r}\otimes_{C(X_{r})}\bC
    \end{equation}
    respectively along the two bottom arrows of \Cref{eq:rng.plb}, and \Cref{eq:gammas} describes an isomorphism between \Cref{eq:gamma.tenss}; that isomorphism provides the ``gluing'' information necessary in order to piece together a single module over the pullback ring $C(X)$ via the aforementioned \cite[p.20]{miln_k}.

  \item \Cref{ex:bg_pb3.11_no} is not unrelated to the phenomenon of {\it pushouts} \cite[Definition 11.30]{ahs}
    \begin{equation*}
      \begin{tikzpicture}[>=stealth,auto,baseline=(current  bounding  box.center)]
        \path[anchor=base] 
        (0,0) node (l) {$A$}
        +(2,.5) node (u) {$D$}
        +(2,-.5) node (d) {$C$}
        +(4,0) node (r) {$B$}
        ;
        \draw[<-left hook] (l) to[bend left=6] node[pos=.5,auto] {$\scriptstyle $} (u);
        \draw[right hook->] (u) to[bend left=6] node[pos=.5,auto] {$\scriptstyle $} (r);
        \draw[->] (l) to[bend right=6] node[pos=.5,auto,swap] {$\scriptstyle $} (d);
        \draw[<-] (d) to[bend right=6] node[pos=.5,auto,swap] {$\scriptstyle $} (r);
      \end{tikzpicture}
    \end{equation*}
    of finite-dimensional $C^*$-algebras $A$, $B$ and $D$ failing to be {\it residually finite-dimensional (RFD)} \cite[Definition V.2.1.10]{blk}: $C$ fails to embed into a $C^*$ product of finite-dimensional $C^*$-algebras precisely \cite[Theorem 4.2]{adel_full-amalg} when $A$ and $B$ admit faithful traces agreeing on $D$. The non-existence of such traces, for the two embeddings
    \begin{equation}\label{eq:c2.2.m3}
      D\cong \bC^2
      \ni(a,b)
      \xmapsto{\quad}
      \mathrm{diag}(a,a,b)
      \quad\text{or}\quad
      \mathrm{diag}(a,b,b)
      \in (A,B\cong M_3),
    \end{equation}
    is precisely what drove \Cref{ex:bg_pb3.11_no} (cf. \cite[Example 1]{zbMATH06233935}, to the same effect). We formalize the present observation in \Cref{le:plbk.cones} below. 
  \end{enumerate}
\end{remarks}

\Cref{le:plbk.cones} refers to the {\it mapping cylinder} \cite[Problem 6.M]{wo} $Z_{\iota}$ of a $C^*$-morphism $D\xrightarrow{\iota} A$: the pullback
\begin{equation*}
  \begin{tikzpicture}[>=stealth,auto,baseline=(current  bounding  box.center)]
    \path[anchor=base] 
    (0,0) node (l) {$D$}
    +(2,.5) node (u) {$Z_{\iota}$}
    +(2,-.5) node (d) {$A$}
    +(4,0) node (r) {$C([0,1],A)$.}
    ;
    \draw[<-] (l) to[bend left=6] node[pos=.5,auto] {$\scriptstyle $} (u);
    \draw[->] (u) to[bend left=6] node[pos=.5,auto] {$\scriptstyle $} (r);
    \draw[->] (l) to[bend right=6] node[pos=.5,auto,swap] {$\scriptstyle \iota$} (d);
    \draw[<<-] (d) to[bend right=6] node[pos=.5,auto,swap] {$\scriptstyle \mathrm{eval}_0$} (r);
  \end{tikzpicture}
\end{equation*}

\begin{lemma}\label{le:plbk.cones}
  Let $D\lhook\joinrel\xrightarrow{\iota_i}A_i$, $i=0,1$ be unital embeddings of finite-dimensional $C^*$-algebras, and write
  \begin{itemize}[wide]
  \item $\cA_{\iota_0,\iota_1}\xrightarrowdbl{}X$ for the bundle over 
    \begin{equation}\label{eq:xpush}
      X
      :=
      \bigg(\text{pushout of two copies of $0\lhook\joinrel\xrightarrow{\quad}[0,1]$}\bigg)
      \cong
      [-1,1]
    \end{equation}
    whose space of sections is the pullback $Z_{\iota_0}\times_D Z_{\iota_1}$ of the two maps $Z_{\iota_i}\to D$;

  \item and $\cA_{\iota_i}$, $i=0,1$ for the restrictions of $\cA_{\iota_0,\iota_1}$ to the two copies of $[0,1]$ in \Cref{eq:xpush}.
  \end{itemize}

  If the ranks $r(\cA_{\iota_i})$ are equal, there is an optimal expectation $\Gamma(\cA_{\iota_0,\iota_1})\xrightarrow{E}X$ if and only if the unique optimal tracial states on $A_i$, $i=0,1$ agree on $D$.  \qedhere
\end{lemma}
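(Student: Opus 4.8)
The plan is to reduce everything to the fiberwise behaviour of a conditional expectation. First I would record that any conditional expectation $E\colon \Gamma(\cA_{\iota_0,\iota_1})\to C(X)$, being automatically a $C(X)$-bimodule map, is \emph{localizable}: for every $x\in X$ there is a state $E_x$ on the fiber $\cA_x$ with $E(s)(x)=E_x(s(x))$, and conversely a field $x\mapsto E_x$ of fiber states for which every $x\mapsto E_x(s(x))$, $s\in\Gamma$, is continuous assembles back into such an $E$. This is the standard picture of expectations onto the central copy of $C(X)$ in a continuous $C^*$-bundle, as used in \cite{bg_cx-exp}. Because complete positivity in an (F) bundle is checked fiber by fiber, one also gets $K(E)=\sup_{x}K(E_x)$, where $K(E_x)$ is the $K$-constant of the state $E_x$ on $\cA_x$. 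This converts the statement into a question about a continuous family of states, with fibers $A_0$ over $[-1,0)$, $A_1$ over $(0,1]$, and $D$ over the central point $0$.

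The one small algebraic input I would isolate is a restriction estimate: for a unital inclusion $D\hookrightarrow A$ and any state $\tau$ on $A$, one has $K_D(\tau|_D)\le K_A(\tau)$, since a map witnessing $K\tau-\id\ge 0$ on $A$ restricts to $D$ with image landing back in $D$ (as $1\in D$), and so witnesses $K(\tau|_D)-\id\ge 0$ there. Applied to the optimal state on $A_i$ this yields $r(D)\le r(A_i)$, whence $r(\cA_{\iota_0,\iota_1})=\max\bigl(r(A_0),r(A_1),r(D)\bigr)=r$, the common value $r:=r(\cA_{\iota_0})=r(\cA_{\iota_1})$.

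For necessity, suppose $E$ is optimal, i.e. $\sup_x K(E_x)=K(E)=r$. At each lateral point $x\neq 0$ the fiber is $A_i$, so $K(E_x)\ge r(A_i)=r$ by \cite[Theorem 1.4]{bg_cx-exp}; equality $K(E_x)=r$ then forces $E_x$ to be \emph{the} unique optimal tracial state $\tau_i$ on $A_i$ by \cite[Lemma 3.2(ii)]{bg_cx-exp}. Thus $E_x\equiv\tau_i$ is constant on each side. Now feed in, for a prescribed $d\in D$, a section with value $d$ at $0$ (e.g. $\iota_i(d)$ constant on each side, which does lie in $Z_{\iota_0}\times_D Z_{\iota_1}$): continuity of $E(s)$ at $0$ makes the one-sided limits $\lim_{x\to 0}\tau_i(s(x))=(\tau_i|_D)(d)$ agree, forcing $\tau_0|_D=\tau_1|_D$.

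For sufficiency, assuming $\tau_0|_D=\tau_1|_D$ I would build $E$ from the field $E_x:=\tau_i$ on side $i$ together with $E_0:=\tau_0|_D=\tau_1|_D$ on $D$; the matching of one-sided limits just computed is precisely what renders $x\mapsto E_x(s(x))$ continuous, so $E$ is a genuine expectation, and $K(E)=\max\bigl(r,\,K_D(\tau_0|_D)\bigr)=r$ by the restriction estimate once more. The main obstacle I anticipate is not any single conceptual step but the bookkeeping of the fiberwise picture in this glued, non-locally-trivial bundle—verifying localization of $E$ and the identity $K(E)=\sup_x K(E_x)$, and making the continuity argument at the central fiber airtight; the genuinely load-bearing inputs are the uniqueness of the optimal state and the restriction inequality for $K$-constants.
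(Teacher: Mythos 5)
Your proposal is correct and follows essentially the same route as the paper, which proves this lemma through the argument of \Cref{ex:bg_pb3.11_no} as formalized in \Cref{pr:same.brat}: fiberwise localization of $E$ plus the uniqueness of the optimal state (\cite[Lemma 3.2(ii)]{bg_cx-exp}) pins $E_x$ to $\tau_i$ on the two lateral intervals, and the existence of an optimal expectation reduces to continuity at the exceptional fiber, i.e.\ to $\tau_0|_D=\tau_1|_D$. The only notable variation is that you control the $K$-constant at the central fiber via the restriction inequality $K_D(\tau|_D)\le K_A(\tau)$, where the paper instead computes the restricted optimal state explicitly through the column sums \Cref{eq:col.sum.tup} of the Bratteli matrix --- a welcome small simplification that makes the sufficiency direction self-contained.
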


\begin{remark}\label{re:even.ab}
  Note incidentally that \Cref{le:plbk.cones} applies to bundles of {\it commutative} $C^*$-algebras: \Cref{eq:c2.2.m3} can easily be interpreted as morphisms to $\bC^3$ rather than $M_3$. The conclusion is that for the corresponding {\it branched cover} \cite[\S 1]{pt_brnch} $Y\to X:=[-1,1]$ with cardinality-3 generic fibers (over $[-1,1]\setminus\{0\}$) and exceptional fiber of cardinality $2$ at $0\in X$ there is no expectation $C(Y)\xrightarrow{E}C(X)$ with $K(E)=3$. There are \cite[Theorem 1.1]{pt_brnch}, of course, conditional expectations with {\it larger} $K(\cdot)$. 
\end{remark}

The principle underlying \Cref{le:plbk.cones} is broader than that statement suggests. The gadgetry of {\it Bratteli diagrams}, familiar (\cite[Chapter 2]{effr_dim}, \cite[\S XIX.1]{tak3}) as bookkeeping tools in studying embeddings of finite-dimensional $C^*$-algebras, will be useful in phrasing the generalization in \Cref{pr:same.brat}. 

Following those sources, we extend the usual notation $M_n$ to {\it multi-matrix} algebras
\begin{equation*}
  M_{\bf n}
  :=
  M_{n_1}\times\cdots\times M_{n_k}
  ,\quad
  {\bf n}:=(n_1,\ \cdots,\ n_k)
  \in
  \bZ_{>0}^{\ell({\bf n}):=k}.
\end{equation*}
An embedding $M_{\bf m}\lhook\joinrel\xrightarrow{\iota} M_{\bf n}$ (unital for us, here, but not necessarily so in the literature) can be described uniquely up to {\it (inner) equivalence}, i.e. \cite[p.7]{effr_dim} conjugation by (inner) automorphisms on both sides by either
\begin{itemize}[wide]
\item the $\ell({\bf n})\times\ell({\bf m})$ matrix $T=T_{\iota}$ defined by
  \begin{equation*}
    \begin{aligned}
      T_{ij}
      &:=
        \mathrm{Tr}_i(\iota(\text{minimal projection in factor }M_{m_j}))
      \quad\text{where}\\
      \mathrm{Tr}_i
      &:=
        \text{un-normalized trace of the factor }M_{n_i};
    \end{aligned}
  \end{equation*}
\item or the {\it Bratteli diagram} of $\iota$, a bipartite graph on the bipartition
  \begin{equation*}
    \left\{1,\ \cdots,\ \ell(m)\right\}
    \sqcup
    \left\{1,\ \cdots,\ \ell(n)\right\}
  \end{equation*}
  with $T_{ij}$ edges connecting vertex $j$ on the left-hand side and vertex $i$ on the right. 
\end{itemize}
We may as well refer to the former construct as a {\it Bratteli matrix} for the embedding, but we use the two notions interchangeably in any case: they encode the same information. 

\begin{definition}\label{def:brat.germ}
  Let $\cA\xrightarrowdbl{}X$ be a unital subhomogeneous (F) $C^*$ bundle over a locally paracompact $T_2$ space (so that in particular the bundle is full \cite[Proposition 3.4]{zbMATH03539905}).

  A {\it germ at $x\in X$} of invariants attached to embeddings of $C^*$-algebras (such as Bratteli diagrams or matrices, or other invariants derived therefrom: row/column sums, etc.) is an equivalence class of such invariants obtained as follows:
  \begin{itemize}[wide]
  \item extend the fiber $\cA_x$ to a $(\dim \cA_x)$-homogeneous $C^*$ bundle $\cB\xrightarrowdbl{}U$ over a neighborhood $U\ni x$ (possible essentially by \cite[Theorem 3.1]{fell_struct}, but see also \Cref{pr:ss.fibers.extend}: the local {\it compactness} of the base space in the former result can easily be slackened to local paracompactness);

  \item consider all Bratteli diagrams/matrices attached to embeddings $\cB_y\le \cA_y$, $y\in U$ (and the invariants they generate, whatever those may be);

  \item identify those mutually conjugate under (possibly outer) automorphisms of $\cA_y$;
    
  \item and retain only those achievable over arbitrarily small neighborhoods $U\ni x$. 
  \end{itemize}
\end{definition}

\begin{remark}\label{re:why.germ}
  The fact that we are interested only in diagrams which persist arbitrarily close to $x$ is what justifies the {\it germ} terminology, familiar from sheaf theory \cite[\S I.1, p.2]{bred_shf_2e_1997}: germs of functions are equivalence classes thereof, identifiable if equal across a neighborhood of the base point in question. 
\end{remark}


We record the following result on extending finite-dimensional semisimple Banach algebras to nearby fibers. It is very much in the spirit of \cite[Theorem 3.1]{fell_struct}, which covers the $C^*$ case over locally compact Hausdorff base spaces. 

\begin{proposition}\label{pr:ss.fibers.extend}
  Let $\cA\xrightarrowdbl{}X$ be a unital Banach-algebra (F) bundle over a locally paracompact Hausdorff base space.

  \begin{enumerate}[(1),wide]
  \item A finite-dimensional semisimple subalgebra $A\le \cA_x$ extends to a $(\dim A)$-homogeneous Banach-algebra bundle locally around $x$.

  \item The analogous result holds {\it mutatis mutandis} for $C^*$-algebra bundles.  
  \end{enumerate}   
\end{proposition}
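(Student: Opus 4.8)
The plan is to extend $A$ by transporting a \emph{complete system of matrix units} to the neighbouring fibres and then polishing the transported family back into an honest system of matrix units, the whole construction being a bundle-theoretic (and Banach-algebraic) version of Fell's argument \cite[Theorem 3.1]{fell_struct}. Write $A\cong M_{n_1}\times\cdots\times M_{n_k}$ and fix a linear basis of $A$ consisting of matrix units $e^{(l)}_{ij}$ ($1\le l\le k$, $1\le i,j\le n_l$), so that $e^{(l)}_{ij}e^{(m)}_{pq}=\delta_{lm}\delta_{jp}e^{(l)}_{iq}$ and $\sum_{l,i}e^{(l)}_{ii}=1_A$. Since the bundle is full over the locally paracompact $T_2$ base (\cite[Proposition 3.4]{zbMATH03539905}; cf. the (F) analogue \cite[Theorem 3.2]{gierz_bdls}), each $e^{(l)}_{ij}$ is the value at $x$ of a continuous local section $s^{(l)}_{ij}$ on some neighbourhood $U_0\ni x$.

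Next I would exploit upper semicontinuity. Multiplication being continuous on $\cA\times_X\cA$, the ``defect'' sections $s^{(l)}_{ij}s^{(m)}_{pq}-\delta_{lm}\delta_{jp}s^{(l)}_{iq}$ and $\big(\sum_{l,i}s^{(l)}_{ii}\big)-1$ are continuous and vanish at $x$; by the zero-neighbourhood axiom together with upper semicontinuity of $\|\cdot\|$ they have norm $<\varepsilon$ on a smaller neighbourhood $U$. This is the only place the (H) hypothesis enters, and it enters in the favourable direction: I need the defects merely to be \emph{small} near where they vanish, which upper semicontinuity already supplies. Thus on $U$ the $s^{(l)}_{ij}(y)$ form an $\varepsilon$-approximate system of matrix units in the Banach algebra $\cA_y$, with $\varepsilon$ as small as desired.

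The heart of the matter is the standard perturbation of approximate matrix units, carried out \emph{fibrewise but continuously in $y$}. First polish the approximate diagonal idempotents $s^{(l)}_{ii}(y)$: their spectra lie near $\{0,1\}$ once $\varepsilon$ is small, so the holomorphic functional calculus $p^{(l)}_{ii}(y):=\frac{1}{2\pi i}\oint_{|\zeta-1|=1/2}(\zeta-s^{(l)}_{ii}(y))^{-1}\,d\zeta$ against a single contour valid over all of $U$ produces honest idempotents depending continuously on $y$ (the resolvent is continuous in the element, the contour is fixed), with $p^{(l)}_{ii}(x)=e^{(l)}_{ii}$. A finite recursive orthogonalisation then replaces these by mutually orthogonal idempotents, and sandwiching the off-diagonal $s^{(l)}_{ij}(y)$ between the orthogonalised projections and normalising the resulting corner elements (invertible for small $\varepsilon$) yields sections $\tilde e^{(l)}_{ij}$ satisfying the matrix-unit relations \emph{exactly} in each $\cA_y$. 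Every operation used --- multiplication, the fixed-contour functional calculus, finitely many rational corrections --- is continuous in the Banach-algebra structure, hence carries continuous sections to continuous sections; and because the inputs already satisfy the relations exactly at $y=x$, each correction fixes them there, so $\tilde e^{(l)}_{ij}(x)=e^{(l)}_{ij}$. Setting $\cB_y:=\spn\{\tilde e^{(l)}_{ij}(y)\}\subseteq\cA_y$, the exact relations make $e^{(l)}_{ij}\mapsto\tilde e^{(l)}_{ij}(y)$ an isomorphism $A\xrightarrow{\cong}\cB_y$, whence $\dim\cB_y=\dim A$ on all of $U$ and $\cB_x=A$; the frame $(\tilde e^{(l)}_{ij})$ trivialises $\cB$ over $U$, exhibiting it as a locally trivial --- in particular $(\dim A)$-homogeneous --- Banach-algebra subbundle of $\cA|_U$, inheriting the (H) (or (F)) structure. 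For the $C^*$ version I would run the same scheme with self-adjoint data, replacing each approximate idempotent by $\tfrac12(s^{(l)}_{ii}+(s^{(l)}_{ii})^*)$ before functional calculus to obtain genuine self-adjoint projections, and using polar decomposition inside the (finite-dimensional) corners to make the $\tilde e^{(l)}_{ij}$ into a $*$-closed system; continuity of the involution keeps every step a continuous-section operation, giving a $C^*$-subbundle.

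I expect the main obstacle to be bookkeeping rather than conceptual: namely verifying that the recursive orthogonalisation and the off-diagonal reconstruction can be written as \emph{globally defined} continuous (rational together with fixed-contour functional-calculus) expressions in the $s^{(l)}_{ij}$ that simultaneously (i) terminate in an exact system of matrix units, (ii) reduce to the identity on data already satisfying the relations --- this being what pins down $\cB_x=A$ rather than merely a nearby copy --- and (iii) remain valid over a single neighbourhood $U$ on which the spectral gaps needed for the functional calculus are uniform. The favourable one-sidedness of upper semicontinuity noted above is exactly what makes (iii) automatic, so the remaining work is to record the classical perturbation formulas in a form manifestly continuous in the fibre variable.
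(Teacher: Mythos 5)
Your argument has the same skeleton as the paper's proof --- use fullness over a locally paracompact base \cite[Proposition 3.4]{zbMATH03539905} to spread a linear basis of $A$ into local sections, observe that upper semicontinuity of the norm makes the finitely many defect sections small near $x$, then perturb the approximately multiplicative data into exactly multiplicative data --- but it diverges at the key step. The paper does not touch matrix units at all: it forms the linear maps $\varphi_{y}\colon A\ni\sum_i c_i s_i(x)\mapsto\sum_i c_i s_i(y)\in\cA_y$, notes they are uniformly bounded with uniformly small multiplicative defect $\varphi_y^{\vee}$, and invokes Johnson's AMNM machinery \cite[Corollary 3.2, together with the proof of Theorem 3.1]{john_approx} to deform $\varphi_y$ continuously into genuine morphisms, for \emph{any} finite-dimensional semisimple Banach algebra $A$. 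Your matrix-unit surgery (fixed-contour Riesz idempotents, recursive orthogonalisation, corner normalisation) in effect re-proves by hand the special case of Johnson's theorem for multi-matrix algebras. What the citation buys the paper is brevity and generality of scalars: over $\bR$, Wedderburn allows factors $M_n(\bR)$, $M_n(\bC)$, $M_n(\bH)$, and a complete system of matrix units no longer spans $A$, so your opening reduction $A\cong M_{n_1}\times\cdots\times M_{n_k}$ silently fixes $\bC$ as ground field. What your route buys is an explicit trivialising frame, at the cost of the fibre-crossing continuity checks you acknowledge (fibrewise resolvents and inverses of continuous sections are again continuous sections: run a Neumann series against a section through the inverse at the base point, using fullness and closure of continuous sections under uniform limits).

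There is, however, one point you assert that is a genuine gap rather than bookkeeping, and it is precisely where upper semicontinuity works \emph{against} you rather than for you: the claims that the corner elements are ``invertible for small $\varepsilon$'' and that $\dim\cB_y=\dim A$. An $\varepsilon$-approximate system of matrix units can perturb onto an \emph{exact but degenerate} one in which an entire matrix block is zero: $\|s^{(l)}_{ii}(y)^2-s^{(l)}_{ii}(y)\|<\varepsilon$ is perfectly compatible with $\|s^{(l)}_{ii}(y)\|<\varepsilon$, in which case the spectrum sits entirely near $0$ and your Riesz idempotent $p^{(l)}_{ii}(y)$ vanishes; the relations only forbid \emph{all} blocks dying (because $\sum_{l,i}s^{(l)}_{ii}\approx 1$) and partial death within one block. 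Upper semicontinuity supplies smallness of defects but no lower bound on $\|s^{(l)}_{ii}(y)\|$, and in an honestly (H) bundle section values through a basis of $\cA_x$ really can collapse at nearby points: e.g.\ the usc unital $C^*$ bundle over $[0,1]$ with $\cA_0=\bC^2$, $\cA_y=\bC$ for $y>0$, and sections the pairs $(f,g)\in C([0,1])^2$ evaluating to $(f(0),g(0))$ at $0$ and to $f(y)$ at $y>0$, where the section through the idempotent $(0,1)$ has norm tending to $0$, and $\bC^2$ admits no embedding into the nearby fibres at all. So the non-degeneracy needed for $(\dim A)$-homogeneity is an extra input your write-up does not supply --- and, to be fair, the paper's own proof is equally silent on why Johnson's nearby morphisms remain injective. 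In the (F)/continuous-norm case the missing lower bound is exactly what norm continuity provides --- sections through a basis stay pointwise independent, with norms of unit-coefficient combinations bounded below, near $x$ (the Remarque of \cite{dd} quoted in the proof of \Cref{pr:contellips}) --- so you should either restrict your homogeneity claim to that setting or add an explicit hypothesis/argument preventing fibre collapse.
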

\begin{proof}
  Pick a basis for $A$ and extend it locally around $x$ to linearly-independent sections $(s_i)_i$, as allowed by the fullness \cite[Proposition 3.4]{zbMATH03539905} of the bundle. The spaces $\mathrm{span}\{s_i(x')\}_i$ of course constitute a trivial Banach-{\it space} bundle locally around $x$, but the maps
  \begin{equation*}
    \cA_x
    \supseteq
    A
    \ni
    \sum_{i}c_i s_i(x)
    \xmapsto{\quad\varphi_{x'}\quad}
    \sum_{i}c_i s_i(x')
    \in \cA_{x'}
  \end{equation*}
  will not, generally, be multiplicative. For $x'$ sufficiently close to $x$ however, they are uniformly bounded and have uniformly bounded discrepancies from multiplicativity:
  \begin{equation*}
    \exists\left(
      \text{nbhd }U\ni x
      ,\quad
      K,\delta>0
    \right)
    \left(
      \forall x'\in U
      \ :\
      \|\varphi_{x'}\|\le K
      \ \text{and}\
      \left\|\varphi_{x'}^{\vee}\right\|\le \delta
    \right)
  \end{equation*}
  where
  \begin{equation*}
    \varphi^{\vee}(a,b)
    :=
    \varphi(ab)-\varphi(a)\varphi(b)
    \quad\left(\text{see \cite[\S 1]{john_approx}}\right).
  \end{equation*}
  Semisimplicity and finite-dimensionality then allow us to deform $\varphi_{x'}$ continuously into morphisms by \cite[Corollary 3.2]{john_approx} (and the proof of \cite[Theorem 3.1]{john_approx}, which it in turn relies on).
\end{proof}

\begin{remark}\label{re:not.h}
  \cite[Proposition 3.4]{zbMATH03539905} proves fullness in the broader context of (H) bundles, but \Cref{pr:ss.fibers.extend} plainly would not hold in that generality: $\cA$ might have a single non-zero fiber $\cA_x$ and vanish elsewhere. Absent norm \emph{continuity} (rather than only upper semicontinuity) neither the linear independence of the extended sections nor the subsequent norm estimates can be taken for granted away from $x$ (in no matter how small a neighborhood thereof).
\end{remark}


The argument supporting \Cref{ex:bg_pb3.11_no} (and \Cref{le:plbk.cones}) in fact proves:

\begin{proposition}\label{pr:same.brat}
  Let $\cA\xrightarrowdbl{}X$ be a unital subhomogeneous (F) $C^*$ bundle over a compact Hausdorff space, and suppose the (automatically open) subset
  \begin{equation*}
    \left\{x\in X\ |\ r(\cA_x) = r(\cA)\right\}
    \subseteq
    X
  \end{equation*}
  is dense.

  The existence of an optimal expectation $\Gamma(\cA)\xrightarrow{E}C(X)$ is then equivalent to the condition that $\cA$ have unique germs (in the sense of \Cref{def:brat.germ}) of tuples
  \begin{equation}\label{eq:col.sum.tup}
    \left(\sum_i T_{ij}\right)_j
    =
    \left(\text{sum along column $j$}\right)_j
    ,\quad (T_{ij})_{i,j}=\text{Bratteli matrix}
  \end{equation}
  at every $x\in X$.
\end{proposition}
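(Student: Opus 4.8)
The plan is to pass to the fiberwise picture and reduce the existence of an optimal expectation to a single continuity question about a continuous field of tracial states, whose only obstruction is the germ data \Cref{eq:col.sum.tup}. First I would record the fiberwise dictionary. A positive $C(X)$-bimodule idempotent $E\colon\Gamma(\cA)\to C(X)$ is local: for $f\in I_x:=\{f\in C(X)\mid f(x)=0\}$ and a section $s$ one has $E(fs)(x)=f(x)E(s)(x)=0$, so $E_x:=\mathrm{ev}_x\circ E$ kills $\overline{I_x\cdot\Gamma(\cA)}=\Gamma_{x\to 0}(\cA)$ and descends to a state $E_x\colon\cA_x\to\bC$. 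Fullness gives, for positive $p\in\cA_x$, a positive section with value $p$, so $K\cdot E-\id\ge 0$ in $\Gamma(\cA)$ holds iff $K\cdot E_x-\id\ge 0$ on each fiber; hence $K(E)=\sup_x K(E_x)$. For a state $\phi$ on $M_{\mathbf n}$ with density $h$ one computes, from positivity of $K\phi(\,\cdot\,)\,1-\id$, that $K(\phi)=1/\mu_{\min}(h)$, which is minimized at $h=\tfrac1r\,\id$ to the value $r=r(M_{\mathbf n})$; this is the unique optimal state \cite[Lemma 3.2(ii)]{bg_cx-exp}, namely the normalized total trace $\tau:=\tfrac1{r(\cA)}\mathrm{Tr}$. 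Consequently, on the open dense locus $D:=\{x\mid r(\cA_x)=r(\cA)\}$ optimality forces $r(\cA)\ge K(E)\ge K(E_x)\ge r(\cA_x)=r(\cA)$, so $E_x=\tau_x$. Thus an optimal expectation exists iff the field $(\tau_y)_{y\in D}$ extends to a continuous field of states on all of $X$ with fiberwise $K$-constant $\le r(\cA)$, i.e. iff $y\mapsto\tau_y(a(y))$ extends continuously across each $x_0\notin D$ for every section $a$.

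Next I would analyze such a one-sided limit at a fixed $x_0\notin D$, approaching through $D$. Using \Cref{pr:ss.fibers.extend} I extend the full fiber $\cA_{x_0}$ to a $(\dim\cA_{x_0})$-homogeneous sub-bundle $\cB\hookrightarrow\cA|_U$ with $\cB_{x_0}=\cA_{x_0}$, giving unital embeddings $\iota_y\colon\cB_y\hookrightarrow\cA_y$ as in \Cref{def:brat.germ}. Since $a(x_0)\in\cA_{x_0}=\cB_{x_0}$, I pick a $\cB$-valued section $\tilde a$ with $\tilde a(x_0)=a(x_0)$; continuity of the norm in an (F) bundle gives $\|a(y)-\tilde a(y)\|\to 0$, hence $\tau_y(a(y))-\tau_y(\tilde a(y))\to 0$, reducing the problem to $\cB$-valued sections. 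The key computation is that for a minimal projection $p_j$ in the $j$-th factor of $\cB_y$, unitality yields $\tau_y(\iota_y(p_j))=\tfrac1{r(\cA)}\sum_i\mathrm{Tr}_i(\iota_y(p_j))=\tfrac1{r(\cA)}\sum_i T_{ij}$, the normalized column sum of the Bratteli matrix of $\iota_y$. Hence $\tau_y|_{\cB_y}$ is the tracial state on $\cB_y\cong M_{\mathbf m}$ whose density has eigenvalues $(\sum_i T_{ij})/r(\cA)$ on factor $j$. Trivializing $\cB$ locally (homogeneous bundles are locally trivial, \Cref{re:loctriv.over.locparaco}) identifies all $\cB_y$ with $\cB_{x_0}=\cA_{x_0}$ compatibly, and $\tilde a(y)\to a(x_0)$ there; therefore $\tau_y(\tilde a(y))$ converges for every $\tilde a$ iff the integer tuples $\big(\sum_i T_{ij}\big)_j$ converge as $y\to x_0$ in $D$ — equivalently, being integer-valued, iff they are eventually constant, which is exactly the assertion that $\cA$ has a unique germ of column-sum tuples at $x_0$.

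Finally I would assemble the two implications and check optimality of the extension. If the germ is unique at every $x_0$, the limits exist and define $E_{x_0}$ to be the tracial state on $\cA_{x_0}$ with density eigenvalues $c_j^*/r(\cA)$, where $c_j^*:=\lim_y\sum_i T_{ij}$; patching these with $\tau$ on $D$ gives a continuous field, hence a positive unital $C(X)$-bimodule idempotent $E$. Since each $\iota_y$ is a unital embedding we have $c_j^*\ge 1$, so the minimal density eigenvalue of $E_{x_0}$ is $\ge 1/r(\cA)$ and $K(E_{x_0})=r(\cA)/\min_j c_j^*\le r(\cA)$; together with $K(E_y)=r(\cA)$ on $D$ this gives $K(E)=\sup_x K(E_x)=r(\cA)$, an optimal expectation. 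Conversely, if some germ at $x_0$ fails to be unique, two nets $y_\nu,y'_\nu\to x_0$ in $D$ realize distinct column-sum tuples, hence distinct limiting tracial states on $\cA_{x_0}$; choosing a section $a$ separating them makes $y\mapsto\tau_y(a(y))$ have no limit at $x_0$, precluding any continuous — hence optimal — extension. The main obstacle is the middle step: confirming that the limit of $\tau_y(a(y))$ is genuinely governed by the restriction to the local homogeneous extension $\cB$ and is read off from the column sums. This rests on producing $\cB\hookrightarrow\cA|_U$ via \Cref{pr:ss.fibers.extend}, on norm-continuity to discard the non-$\cB$ part, and on matching this bookkeeping with the germ formalism of \Cref{def:brat.germ} — with density of $D$ being what allows the forced values on $D$ to determine $E$ everywhere.
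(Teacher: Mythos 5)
Your proposal is correct and follows essentially the same route as the paper's proof: optimality forces the unique optimal (normalized-trace) state $\tau_y$ on the dense locus $\{r(\cA_y)=r(\cA)\}$, and the existence of an optimal expectation reduces to continuously extending this forced field of states, with the restriction of $\tau_y$ along the embedding germs $\cB_y\le\cA_y$ (via \Cref{pr:ss.fibers.extend}) computed exactly by the normalized column sums of the Bratteli matrices, so that uniqueness of the germs \Cref{eq:col.sum.tup} is the precise obstruction. You merely supply more detail than the paper at several points (the fiberwise dictionary $K(E)=\sup_x K(E_x)$, the norm-continuity reduction to $\cB$-valued sections, and the eventual-constancy argument for the integer-valued tuples), all of which is consistent with the paper's compressed argument.
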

\begin{proof}
  As in \Cref{ex:bg_pb3.11_no}, an expectation $\Gamma(\cA)\xrightarrow{E}C(X)$ with $K(E)=r:=r(\cA)$ would have to induce on all fibers $\cA_x$ with $r(\cA_x)=r(\cA)$ that optimal state. 

  Conversely, that choice provides an expectation $E|_U$ with appropriate $K$-constant, with
  \begin{equation}\label{eq:large.r.loc}
    U:=\left\{x\in X\ |\ r(\cA_x)=r\right\},
  \end{equation}
  assumed dense in $X$. That unique $E|_U$ extends across all of $X$ precisely when, for every $x\in X$, the restrictions of the optimal states on $\cA_{x'}$ to $\cA_x$ along embeddings attached to Bratteli germs are all equal. Given the expression \cite[(3.5)]{bg_cx-exp}
  \begin{equation}\label{eq:opt.state}
    \sum_{i=1}^{\ell(n)} \frac {n_i}{|{\bf n}|}\left(\text{normalized trace on }M_{n_i}\right)
    ,\quad
    |{\bf n}|:=\sum_i n_i
    ,\quad
    {\bf n} = (n_i)_i
  \end{equation}
  for the optimal state on the multi-matrix algebra $M_{\bf n}$, this in turn translates to the hypothesis in the statement: if $(T_{ij})_{i,j}$ is the $\ell({\bf n})\times \ell({\bf m})$ Bratteli matrix of a unital embedding $M_{\bf m}\lhook\joinrel\to M_{\bf n}$, then the restriction of the optimal state \Cref{eq:opt.state} on $M_{\bf n}$ along that embedding is
  \begin{equation*}
    \sum_{j=1}^{\ell(m)}
    \left(
      \frac{\sum_{i}T_{ij}}{|\bf n|}
      \cdot
      \left(\text{un-normalized trace on }M_{m_j}\right)
    \right),
  \end{equation*}
  so that \Cref{eq:col.sum.tup} is a complete invariant for that restriction. 
\end{proof}

\begin{remark}\label{re:get.bg_prop3.7}
  \Cref{pr:same.brat} recovers \cite[Proposition 3.7]{bg_cx-exp}, asserting the existence of an expectation with $K(E)=2$ when that is the rank:
  \begin{itemize}[wide]
  \item Embeddings among the algebras $\bC$, $\bC^2$ and $M_2$ of rank $\le 2$ are unique up to inner automorphism, so the Bratteli germs themselves are in that case unique.

  \item And whether or not the set $U$ of \Cref{eq:large.r.loc} is dense is irrelevant here, for there is no problem in extending traces across the locus where $r(\cA_x)=1$ (and hence $\cA_x\cong \bC$). 
  \end{itemize}
\end{remark}

The existence \cite[Proposition 3.4]{bg_cx-exp} of optimal expectations for {\it homogeneous} bundles of course also follows from \Cref{pr:same.brat}. More generally, the result holds in the ``multiplicity-free'' case: 

\begin{corollary}\label{cor:mult.free}
  Let $\cA\xrightarrowdbl{}X$ be a subhomogeneous (F) $C^*$ bundle over a compact Hausdorff space, with all Bratteli-matrix germs $(T_{ij})_{i,j}$ having singleton columns (one entry equal to 1 and 0s elsewhere).

  The bundle then admits an optimal expectation $\Gamma(\cA)\to C(X)$.
\end{corollary}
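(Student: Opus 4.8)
The plan is to deduce the statement from \Cref{pr:same.brat}, the only gap being that the latter assumes the top-rank locus $\{x : r(\cA_x) = r(\cA)\}$ dense in $X$ — a hypothesis the corollary does not impose. The first and crucial step is to observe that the singleton-column condition forces the rank function $x \mapsto r(\cA_x)$ to be \emph{locally constant}, which will let me restore the missing density after passing to clopen pieces. Indeed, for a unital embedding $M_{\bf m} \lhook\joinrel\to M_{\bf n}$ with Bratteli matrix $(T_{ij})$ the unitality relation reads $n_i = \sum_j T_{ij} m_j$, whence
\[
  r(M_{\bf n}) = \sum_i n_i = \sum_j \Big(\sum_i T_{ij}\Big) m_j;
\]
when every column sum $\sum_i T_{ij}$ equals $1$ the right-hand side collapses to $\sum_j m_j = r(M_{\bf m})$. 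Applying this to the embeddings $\cB_y \le \cA_y$ (with $\cB_y \cong \cA_x$) underlying the Bratteli germs at $x$ from \Cref{def:brat.germ}, the singleton-column hypothesis yields $r(\cA_y) = r(\cA_x)$ for all $y$ in a suitable neighborhood of $x$, i.e.\ local constancy of the rank.

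Next I would use local constancy to decompose the base. Since $\cA$ is subhomogeneous, $r(\cA_\bullet)$ is bounded and integer-valued, hence takes finitely many values; each level set $X_c := \{x : r(\cA_x) = c\}$ is then clopen (being a level set of a locally constant function) and compact, and $X = \bigsqcup_c X_c$ is a finite clopen partition. On each piece $\cA|_{X_c}$ is again a unital subhomogeneous (F) $C^*$ bundle over a compact Hausdorff space, now of \emph{constant} rank $c = r(\cA|_{X_c})$, so that the density hypothesis of \Cref{pr:same.brat} holds trivially with $U = X_c$.

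It then remains to verify the germ hypothesis on each $X_c$ and to reassemble. The column-sum tuple $\big(\sum_i T_{ij}\big)_j$ of \Cref{eq:col.sum.tup} attached to any singleton-column Bratteli matrix is the all-ones tuple $(1,\ldots,1)$, of length equal to the number of simple summands of $\cA_x$; this depends only on $\cA_x$ and not on the particular embedding, so the germs of \Cref{eq:col.sum.tup} are automatically unique at every point. \Cref{pr:same.brat} thus supplies an optimal expectation $E_c \colon \Gamma(\cA|_{X_c}) \to C(X_c)$ with $K(E_c) = c$ for each $c$. Under the identifications $C(X) \cong \bigoplus_c C(X_c)$ and $\Gamma(\cA) \cong \bigoplus_c \Gamma(\cA|_{X_c})$ afforded by the finite clopen partition, the map $E := \bigoplus_c E_c$ is a conditional expectation onto $C(X)$ with $K(E) = \max_c K(E_c) = \max_c c = r(\cA)$, hence optimal.

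The main conceptual obstacle is the first step: recognizing that the multiplicity-free hypothesis is precisely what makes the rank locally constant, thereby reconciling the corollary (which omits any density assumption) with \Cref{pr:same.brat} (which needs one). The remaining verifications — clopenness of the level sets, uniqueness of the all-ones column-sum germ, and the identity $K\big(\bigoplus_c E_c\big) = \max_c K(E_c)$ for the $K$-constant under a clopen direct-sum decomposition (which follows since $K E - \id \ge 0$ iff $K E_c - \id \ge 0$ for every $c$) — are routine.
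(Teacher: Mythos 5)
Your proposal is correct, and its core is the same reduction the paper uses: verify the two hypotheses of \Cref{pr:same.brat} (germ-uniqueness of the column-sum tuple \Cref{eq:col.sum.tup}, which under the singleton-column hypothesis is the all-ones tuple, and the density of the top-rank locus) and invoke that proposition. Where you genuinely diverge is on the density point. The paper's one-sentence proof asserts that the singleton-column condition is \emph{equivalent} to all fiber ranks $r(\cA_x)$ being equal, so that $U=X$ and density is automatic. Your unitality computation $r(M_{\bf n})=\sum_j\bigl(\sum_i T_{ij}\bigr)m_j$ only yields \emph{local} constancy of the rank, and you compensate by partitioning $X$ into the finitely many clopen level sets $X_c$, applying \Cref{pr:same.brat} on each, and assembling via $K\bigl(\bigoplus_c E_c\bigr)=\max_c K(E_c)$. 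This extra step is not padding: for disconnected $X$ the paper's global-equality claim is literally false --- take $X$ to be two isolated points with fibers $\bC$ and $M_2$, where every Bratteli germ is the identity embedding with matrix $(1)$, trivially singleton-column, yet the ranks differ --- and in that situation the top-rank locus is a proper clopen set, so the density hypothesis of \Cref{pr:same.brat} fails as stated. Your clopen decomposition is thus a small but genuine repair that makes the corollary's proof airtight for disconnected base spaces, at the modest cost of the routine componentwise bookkeeping. One point worth spelling out if you polish this: passing from ``all germs at $x$ have singleton columns'' to ``$r(\cA_y)=r(\cA_x)$ for all $y$ in a neighborhood'' uses the fact that subhomogeneity leaves only finitely many possible Bratteli matrices, so any matrix occurring at points arbitrarily close to $x$ already belongs to the germ at $x$; this is implicit in your phrase ``suitable neighborhood'' and is routine, but it is the hinge on which local constancy turns.
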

\begin{proof}
  The germ-uniqueness constraint of \Cref{pr:same.brat} is satisfied, as is the density requirement: the statement's condition on the matrix germs is equivalent with $r(\cA_x)$ {\it all} being equal (to each other, hence also to $r(\cA)=\sup_x r(\cA_x)$).
\end{proof}


\begin{theorem}\label{th:nc.brnch.cov.cpctmetr}
  For any unital subhomogeneous (F) $C^*$ bundle $\cA\xrightarrowdbl{}X$ over compact metrizable $X$ there exists a finite-index expectation $\Gamma(\cA)\xrightarrow{E}C(X)$. 
\end{theorem}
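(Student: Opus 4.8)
The plan is to recast the existence of a finite-index expectation $\Gamma(\cA)\xrightarrow{E}C(X)$ as the existence of a suitably continuous, uniformly faithful field of fiberwise states. Any unital positive $C(X)$-linear $E$ is of the form $E(a)(x)=\phi_x(a(x))$ for a field of states $\phi_x$ on the finite-dimensional fibers $\cA_x$, and its continuity (i.e. $E(a)\in C(X)$ for every section $a$) amounts to $x\mapsto\phi_x$ being weak-$*$ continuous against sections; such an $E$ restricts to the identity on $C(X)\hookrightarrow\Gamma(\cA)$, so it is automatically a conditional expectation. A fiberwise computation pins down the $K$-constant: writing $\phi_x(\cdot)=\mathrm{Tr}(\rho_x\,\cdot)$ for the density $\rho_x$, positivity of $KE-\id$ is the fiberwise bound $\rho_x\ge K^{-1}1$, so $K(E)=\sup_x\|\rho_x^{-1}\|$, and finiteness of the index is exactly \emph{uniform faithfulness} of $\{\phi_x\}$ (densities bounded below away from $0$, uniformly in $x$). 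The theorem thus reduces to producing a continuous, uniformly faithful field of states. Fiberwise the optimal (tracial) state already has index $r(\cA_x)\le\dim\cA_x\le N$, so the sole difficulty is continuity; and \Cref{ex:bg_pb3.11_no} shows the optimal choice need not vary continuously, so I must deliberately trade optimality for continuity.

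Next I would build such a field locally and glue. Around any $x_0$ I apply \Cref{pr:ss.fibers.extend} to the \emph{whole} fiber $A:=\cA_{x_0}$, extending it to a $(\dim A)$-homogeneous $C^*$-subbundle $\cB\hookrightarrow\cA|_U$ over a neighborhood $U\ni x_0$ with $\cB_{x_0}=\cA_{x_0}$. Being homogeneous, hence locally trivial (cf. \Cref{re:loctriv.over.locparaco}), $\cB$ carries a continuous field of tracial states $\tau_y$ of constant index $r(A)$ — the multiplicity-free case of \Cref{pr:same.brat}, i.e. \Cref{cor:mult.free}. To transport these to the full fibers I would produce a continuous field of faithful conditional expectations $P_y\colon\cA_y\twoheadrightarrow\cB_y$, the canonical candidate being the one preserving the (fiberwise intrinsic) regular trace of $\cA_y$, and set $\widetilde\tau_y:=\tau_y\circ P_y$. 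This is a continuous field of faithful states on $\cA_y$ with $K(\widetilde\tau_y)\le K(P_y)\,K(\tau_y)\le(\dim\cA_y)\,r(A)\le N^2$, uniformly bounded over the compact closure of a smaller $U$. Crucially $\widetilde\tau_{x_0}=\tau_{x_0}$ is the genuine tracial state on $\cA_{x_0}$, so local models built at different base points are automatically compatible at common points up to convexity.

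Finally I would globalize. Compactness of $X$ yields a finite subcover $U_1,\dots,U_m$ of such neighborhoods, each carrying a continuous uniformly faithful field $\widetilde\tau^{(j)}$ of index $\le K_j$; choosing a subordinate partition of unity $\{\rho_j\}$ (available as compact Hausdorff spaces are paracompact) I set $\phi_x:=\sum_j\rho_j(x)\,\widetilde\tau^{(j)}_x$. As a convex combination of states this is a state, it is continuous, and at every $x$ some $\rho_j(x)\ge 1/m$ with $\widetilde\tau^{(j)}_x$ faithful, whence $\rho_x\ge\frac1m(\max_j K_j)^{-1}1$ and $K(E)\le m\max_j K_j<\infty$. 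The main obstacle is the middle step: securing a field $P_y$ of faithful conditional expectations onto $\cB_y$ that is \emph{simultaneously} continuous in $y$ and of uniformly bounded index across the locus where the Bratteli data of $\cB_y\le\cA_y$ degenerates. This is precisely where I expect the metrizability hypothesis to be consumed, through a metric-controlled (Tietze/selection-type) interpolation of the expectations across the closed strata $\{x\mid\dim\cA_x\le k\}$, with convexity of the state space guaranteeing that faithfulness — and hence finite index — survives the interpolation.
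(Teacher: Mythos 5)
You have correctly reduced the theorem to producing a weak$^*$-continuous, uniformly faithful field of states $\{\phi_x\}$, and your final gluing step is sound: convex combinations via a partition of unity do preserve both continuity and uniform faithfulness, exactly because the constraints are convex. But this only relocates the entire difficulty into each chart, and the middle step you flag as "the main obstacle" is a genuine gap, not a technicality to be interpolated away. The specific failure: your canonical candidate $\widetilde\tau_y=\tau_y\circ P_y$, with $P_y$ preserving the intrinsic regular trace of $\cA_y$, is generally \emph{discontinuous at intermediate exceptional points inside the chart}. A neighborhood $U\ni x_0$ produced by \Cref{pr:ss.fibers.extend} may contain other degenerate points $z$ (these can accumulate, e.g.\ along a Cantor set, so shrinking $U$ does not help, and compactness only bounds the number of charts, not the strata within one chart). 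Continuity of $y\mapsto\tau_y\circ P_y$ at such a $z$ would force the limit of your trace-preserving data along each adjacent stratum to agree with the intrinsic prescription at $z$; but those limits are governed by the Bratteli \emph{germs} of the embeddings $\cA_z\le\cA_y$, which are in general plural and mutually non-conjugate --- this is precisely the mechanism behind \Cref{ex:bg_pb3.11_no} and the criterion of \Cref{pr:same.brat} (restriction of the canonical trace of $M_3$ along $\mathrm{diag}(a,a,b)$ versus $\mathrm{diag}(a,b,b)$ gives different states on $\bC^2$). Consequently \emph{no} single-valued assignment determined by intrinsic fiberwise data can be continuous across all strata, and your proposed fix --- a "metric-controlled (Tietze/selection-type) interpolation" across the closed strata --- is exactly the unproved step; note also that a naive Tietze extension of states gives no control of faithfulness or of the $K$-constant, which is where the whole content lies.

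The paper closes this gap by abandoning single-valuedness. It recursively stratifies the (finitely many) fiber types by germ-minimality, and assigns to each $x$ a weak$^*$-compact convex, $\Aut(\cA_x)$-invariant set $\cK_x$ of faithful states with uniformly bounded $K$-constants, built by extending the states already assigned on lower strata along \emph{all} embedding germs $\cA_y\le\cA_x$ and taking invariant convex hulls --- the multi-valued analogue of the compatibility your single formula cannot achieve. It then verifies that $x\mapsto\cK_x$ is weak$^*$ lower semicontinuous (by construction, states in $\cK_{x_\lambda}$ restrict along germs onto all of $\cK_x$) and invokes the Michael-type selection theorem \cite[Theorem 3.4]{horv_top-cvx} to extract a continuous selection $x\mapsto E_x\in\cK_x$, which aggregates into $E$ with $K(E)\le\sup_k C_k<\infty$. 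This also corrects your guess about where metrizability is consumed: not in a Tietze argument, but in making $\Gamma(\cA)$ separable, hence the weak$^*$ unit ball of $\Gamma(\cA)^*$ compact \emph{metrizable}, as the selection theorem requires. In short, your frame (states-as-expectations, convexity, local-to-global) matches the paper's, but the decisive ideas --- the set-valued, germ-indexed recursive construction and the continuous-selection theorem --- are missing, and without them the local step fails for the same reason optimal expectations fail to exist in \Cref{ex:bg_pb3.11_no}.
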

  
The proof requires some background on various notions of semicontinuity for set-valued maps into (in this case {\it locally convex} \cite[\S 18.1]{k_tvs-1} linear) topological spaces. Recall that a map $X\xrightarrow{\cK_{\bullet}} 2^Y$ for topological spaces $X$ and $Y$ is {\it lower semicontinuous (LSC)} \cite[Definition 15.1]{gorn_fp} (sometimes \cite[Definition 17.2]{ab_inf-dim-an} {\it hemi}continuous) if 
\begin{equation}\label{eq:lsc}
  \forall \text{ open }W\subseteq Y
  ,\quad
  \left\{x\in X\ |\ \cK_x\cap W\ne \emptyset\right\}\subseteq X\text{ is open}.
\end{equation}
Such conditions play an essential role in Michael-type {\it selection theorems} (\cite[Theorem 1.1]{zbMATH06329568}, or the original \cite[Theorem 3.2'']{mich_contsel-1}), one of which we will employ shortly.

\pf{th:nc.brnch.cov.cpctmetr}
\begin{th:nc.brnch.cov.cpctmetr}
  The idea is to prove the existence of a judiciously-chosen map
  \begin{equation}\label{eq:phi2k}
    X\ni x
    \xmapsto{\quad \cK_{\bullet}\quad}
    \text{closed convex }\cK_x
    \subset
    \cS_{++}(\cA_x)
    :=
    \left\{\text{faithful state-space of }\cA_x\right\},
  \end{equation}
  with the right-hand sides regarded as subspaces of the common Banach space $\Gamma(\cA)^*$ via the embedding $\cA_x^*\le \Gamma(\cA)^*$ dual to the surjection $\Gamma(\cA)\xrightarrowdbl{}\cA_x$ (analogous to) \Cref{eq:fib.is.quot}. We will require the following of $\cK_{\bullet}$:
  \begin{enumerate}[(a),wide]
  \item\label{item:bddk} every $\cK_x$ consists not only of faithful states, but in fact of states with $K$-constant dominated by a uniform (in $x\in X$) upper bound $C\ge 1$;
    
  \item\label{item:semicont} and $\cK_{\bullet}$ is weak$^*$ LSC.
  \end{enumerate}

  We portion out the rest of the argument. 
  
  \begin{enumerate}[(I),wide]
  \item\label{item:th:nc.brnch.cov.tfg:ass.cond} {\bf Conclusion, assuming \Cref{item:bddk,item:semicont}.} When $X$ is compact metrizable $\Gamma(\cA)$ is separable by \cite[Example 19.5(iii)]{gierz_bdls} (along with \cite[Proposition 16.4]{gierz_bdls}, providing the $T_2$ property for the total space $\cA$ required by that example). The weak$^*$-topologized unit ball $\Gamma(\cA)^*_1\subset \Gamma(\cA)^*$, housing the compact convex sets $\cK_{\bullet}$ of \Cref{eq:phi2k},
    \begin{itemize}[wide]
    \item is a {\it uniform convex space} in the sense of \cite[Definition 2.2]{horv_top-cvx} (as noted in \cite[sentence preceding Proposition 2.1]{horv_top-cvx}, being a convex subset of the locally convex topological vector space);

    \item is metrizable \cite[Theorem V.5.1]{conw_fa} by separability and compact in any case, regardless of separability, by {\it Alaoglu's theorem} \cite[Theorem V.3.1]{conw_fa}), so in fact {\it completely} metrizable;

    \item and obviously has homotopically trivial (indeed, {\it contractible} \cite[Definition 32.6]{wil_top}) {\it polytopes}, i.e. \cite[\S 1]{horv_top-cvx} convex hulls of finite subsets. 
    \end{itemize}    
    We thus meet the hypotheses of the Michael-type selection theorem \cite[Theorem 3.4]{horv_top-cvx} applicable in this setup, so there is a continuous selection
    \begin{equation*}
      X\ni x
      \xmapsto{\quad}
      E_x\in \cK_x
      \xRightarrow{\quad}
      K(\cE_x)\le C,\quad\forall x\in X
    \end{equation*}
    for $\cK_{\bullet}$, aggregating into the desired expectation $\Gamma(\cA)\xrightarrow{E}C(X)$ with $K$-constant $\le C$.

  \item\label{item:th:nc.brnch.cov.tfg:constr} {\bf Construction of $\cK_{\bullet}$.} We begin by first placing the (finitely many) fiber isomorphism classes in recursively-defined classes $\cC_k$, $k\in\bZ_{\ge 0}$ (finitely many such classes; the construction stabilizes):
    \begin{itemize}[wide]
    \item $\cC_0$ consists of those (isomorphism types of) $\cA_x$ which contain no others (strictly) in germs of embeddings attached to $\cA$. These are, in a sense, minimal (e.g. the exceptional fiber $\bC^2$ in \Cref{ex:bg_pb3.11_no}).
      
    \item For the recursion step, having defined $\cC_\bullet$, $0\le \bullet\le k-1$, place in $\cC_{k}$ those fiber isomorphism types $B$ featuring in embedding germs
      \begin{equation*}
        B'\le B
        ,\quad
        B'\in \cC_{k-1}.
      \end{equation*}
    \end{itemize}
    The definition of $\cK_{\bullet}$ will also be recursive, each step extending the definition across a larger-index
    \begin{equation*}
      X_k:=\left\{x\in X\ |\ \cA_x\in \cC_k\right\}. 
    \end{equation*}
    For $x\in X_0$ take for the compact convex set $\cK_x$ the singleton consisting of the optimal state on $\cA_x$ (the uniqueness of that state renders it $\Aut(\cA_x)$-invariant).
    
    Next, for $x\in X_1$ consider (the finitely many) embedding germs $\cA_y\le \cA_x$ with $y\in X_0$, extend the elements of $\cK_y$ to faithful states on $\cA_x$ across those embeddings, and form the $\Aut(\cA_x)$-invariant convex hull of that space of extensions. Neither operation will affect faithfulness, so that the result will be a compact convex subset
    \begin{equation*}
      \cK_x\subset \cS_{++}(\cA_x)
      ,\quad
      \sup\left\{K(\theta)\ \bigg|\ \theta\in \bigcup_{x\in X_1}\cK_x\right\}\le
      \text{some }C_1<\infty.
    \end{equation*}
    Continue the procedure, producing $\cK_x$ for $x\in X_k$ by extending elements of $\cK_y$ for embedding germs $\cA_y\le \cA_x$ with $y\in X_{k-1}$, so that 
    \begin{equation*}
      \sup\left\{K(\theta)\ \bigg|\ \theta\in \bigcup_{x\in X_k}\cK_x\right\}\le
      \text{some }C_k<\infty.
    \end{equation*}

  \item\label{item:th:nc.brnch.cov.tfg:check.cond} {\bf Verifying \Cref{item:bddk,item:semicont}.} The first requirement \Cref{item:bddk} we made of $x\mapsto \cK_x$ holds with $C:=\sup_k C_k$. As to \Cref{item:semicont}, fix first a
    \begin{equation*}
      x_{\lambda}
      \xrightarrow[\lambda]{\quad\text{convergent net}\quad}
      x
      \quad\text{in}\quad
      X.
    \end{equation*}
    By the very construction of the $\cK_{\bullet}$, the states in $\cK_{x_{\lambda}}$, when restricted to $\cA_x$ along embedding germs $\cA_x\le \cA_{x_{\lambda}}$, recover all states in $\cK_x$:
    \begin{equation*}
      \text{for large $\lambda$}
      ,\quad
      \cK_{x_{\lambda}}|_{\cA_x}\supseteq \cK_x.
    \end{equation*}
    But then, for any fixed state $\varphi\in \cK_x$ on $\cA_x$ and finite collection of sections $s_i\in \Gamma(\cA)$, we can find extensions $\varphi_{\lambda}\in \cA_{x_{\lambda}}$ whose values $\varphi_{\lambda}(s_i)$ respectively approach (with increasing $\lambda$) $\varphi(s_i)$. This is sufficient for the required weak$^*$ LSC. 
  \end{enumerate}
  This concludes the proof. 
\end{th:nc.brnch.cov.cpctmetr}

\addcontentsline{toc}{section}{References}

\Addresses

\end{document}